\documentclass[11pt,fleqn,leqno]{amsart}

\usepackage{a4wide}
\usepackage[utf8]{inputenc}
\usepackage[T1]{fontenc}
\usepackage[english,activeacute]{babel}
\usepackage{hyphenat}

\usepackage{mathtools}

\usepackage{amsmath}
\usepackage{amssymb}
\usepackage{amsthm}
\usepackage{amsfonts}
\usepackage{amsmath}
\usepackage{latexsym}
\usepackage{euscript}
\usepackage{graphicx}
\usepackage[table]{xcolor}
\usepackage{listings}
\usepackage{url}
\usepackage{enumerate}
\usepackage{blindtext}
\usepackage{hyperref}
\usepackage{xcolor}
\usepackage{multicol}
\usepackage{tikz}
\usetikzlibrary{arrows,automata,positioning}

\usepackage{url}
\usepackage[shortlabels]{enumitem}
\setenumerate{itemsep=0.005ex}
\setitemize{itemsep=0.005ex}
\usepackage{euscript}
\usepackage{hyperref}
\usepackage{url}
\usepackage{comment}
\usepackage{xcolor}
\usepackage{verbatim}
\usepackage{tikz}
\usepackage{graphicx}

\newtheorem{theorem}{Theorem}
\newtheorem*{theorem*}{Theorem}

\newtheorem*{proposition*}{Proposition}
\newtheorem{lemma}{Lemma}

\theoremstyle{definition}

\newtheorem*{definition*}{Definition}

\newtheorem*{remark*}{Remark}
\newtheorem*{observation*}{Observation}

\newenvironment{example*}{\noindent{\bf Example.}}{$\Box$\par}

\newcommand{\N}{\mathbb N}

\newcommand{\Nu}{occ}
\newcommand{\U}{occ}
\renewcommand{\iff}{\text{ if and only if }}
 \everymath{\displaystyle}
\allowdisplaybreaks

\hyphenpenalty=1000 
\exhyphenpenalty=1000
\binoppenalty=1000 
\relpenalty=1000
\widowpenalty=10000
\clubpenalty=10000

\begin{document}

\author{Verónica Becher \and Nicole Graus}

\title{The discrepancy of the Champernowne constant}

\date{\today}

\subjclass[2020]
{Primary  11K16}

\keywords{normal numbers; Champernonwne constant; discrepancy}

\begin{abstract}
A number is normal in base $b$
if, in its base 
$b$ expansion, all blocks of digits of equal length have the same asymptotic frequency. 
The rate at which a number approaches normality is quantified by the classical 
notion of discrepancy, which indicates how far the scaling of the number by powers of 
$b$ is from being equidistributed modulo 1. 
This rate is known as the discrepancy of a normal number.
The Champernowne constant 
$c_{10} =0.12345678910111213141516\ldots$ 
 is the most well-known example of a  normal number.
In 1986, Schiffer provided the discrepancy of numbers in a family that includes 
the Champernowne constant. His proof relies on exponential sums. 
Here, we present a discrete and elementary proof specifically for 
the discrepancy of the Champernowne constant.
\end{abstract}

\maketitle

\tableofcontents

\section{Normal numbers}

More than a hundred years ago, Émile Borel defined the property of normality of real numbers:
a real number is normal in a given integer base  $b$ if in its  expansion in base $b$ 
all digits have  the same asymptotic frequency and furthermore, 
all blocks of digits of equal length  have the same asymptotic frequency.
Borel proved that almost all real numbers, with respect to  Lebesgue's measure,  are normal in all integer bases
greater than or equal to~$2$.
A nice version of this proof appears in Hardy and Wright's book~\cite[Theorem 148]{HW}.

Borel would have liked to give an example of a normal number that
 is one of the mathematical constants such as  $\pi$,
 or $e$, or $\sqrt{2}$. But so far none of these 
has been proved normal in any base. It remains an open problem \cite{Bugeaud,BC}.

The best known example of a normal number  is Champernowne constant,
\[
c_{10}= 0.123456789101112131415161718192021222324252627... 
\]
Its expansion is  the concatenation of 
all positive integers expressed in base $10$ in increasing order. 
David Champernonwne defined it  specifically to be an example of a number that is normal in base $10$ \cite{champernowne}. He did this work in 1933 with the supervision of G.H. Hardy while he was a student at King's College, Cambridge  \cite{H}.
Champernowne's proof   is elementary, based on a rigorous counting. 

Notice that $c_{10}$  expressed in base $b$, for  $b\not=10$, 
is different from  the Champernowne constant~$c_b$  which is the concatenation of the positive integers expressed in base $b$, in increasing order.
Figure~\ref{fig:c} depicts the  expansion Champernowne number constant $c_{10}$ expressed in base $10$, in base $2$ and in base~$6$.  
It  is not known whether $c_{10}$ is normal to any integer base other than $10$.  

\begin{figure}
\begin{tabular}{ccc}
\includegraphics[width=0.3\textwidth]{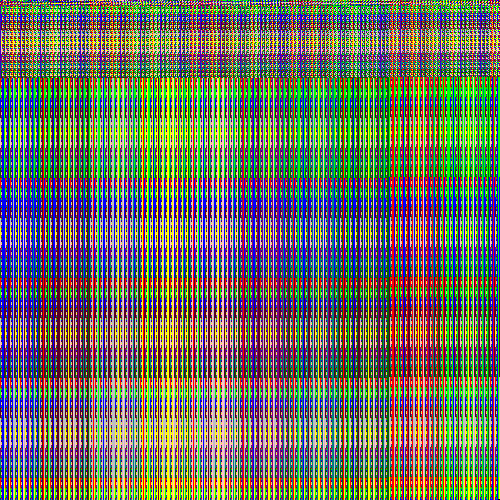}
&
\includegraphics[width=0.3\textwidth]{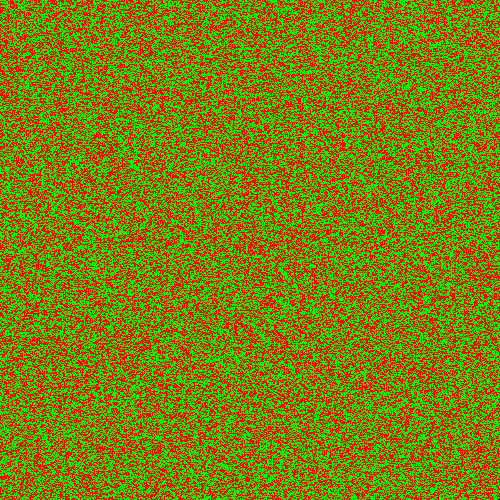}
&
\includegraphics[width=0.3\textwidth]{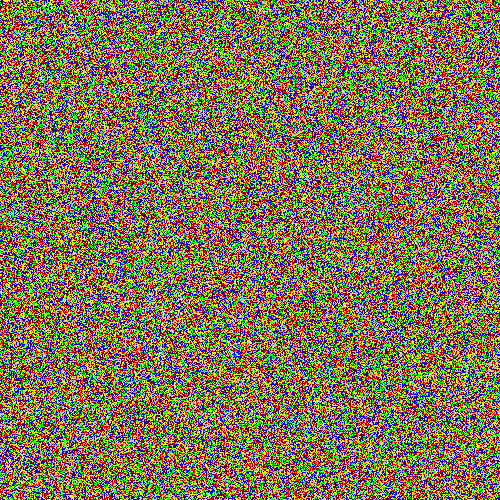}
\end{tabular}
\caption{Plot of the expansion of the first 250000 digits of Champernowne constant in base $10$, in base $2$ and in base $6$, from left to right. In each  base each digit is assigned a different color, and the digits in the expansion  are displayed in row-major order.}
\label{fig:c}
\end{figure}

In this note we present all the definitions and results in base $10$.
It is is equally possible to make the presentation for an arbitrary base $b$
greater than or equal to $2$.

For any real number  $\alpha \in [0,1)$, its expansion in base $10$ is the sequence of digits
$(\alpha_i)_{i\in\N}$ such that $\alpha_i\in \{0,..,9\}$ and 
\[
\alpha=\lfloor \alpha\rfloor+\sum_{i\geq 1} \alpha_i 10^{-i}.
\]
In case $\alpha$ is a rational number, it may have two expansions and we choose the one ending with a tail of $0$s.

\begin{definition*}[Ocurrences counter]
Let $k \in \N$ and let $B=(b_1 \ldots b_k)$ be a block of digits~$b_i \in \{0, \ldots, 9\}$ of length $k$.
    Let $N \in \N$. We define 
    $\Nu(\alpha,B, N)$ as the number of occurrences of the block $B$ 
in $(\alpha_1 \: \alpha_2 \ldots \alpha_N)$ as $k$ consecutive digits,
\[
\Nu(\alpha,B,N)=\#\{i: \alpha_i\ldots \alpha_{i+k-1}=b_1\ldots b_k, 1\leq i\leq N-k+1\}.
\]
\end{definition*}

\begin{example*}
  For  $N=20$, $B=(1\:3\:1)$ and 
  $
   \alpha = 0. 13311321 \mathbf{13131}75 \mathbf{131} 9 \! \underset{\substack{\uparrow \\ \alpha_N}}{1} \! 31
  $
 $\Nu(\alpha,B, N)=3$.\qedhere
\end{example*}


\begin{definition*}[Normal number in base $10$]
    A real number  $\alpha$ is \textit{normal in base 10} if for all $k \in \N$ 
and for every $B$ block of digits of length $k$,
    \[
    \lim_{N \to \infty} \frac{\Nu(\alpha,B, N)}{N} = \frac{1}{10^k}.
    \]
\end{definition*}

\begin{example*}
The rational number 
$0.0123456789 \:  0123456789 \: 0123456789 \ldots
$
is not normal because although the frequency of each digit is $1/10$, the frequency of the block $(1\:1)$ is~$0$.\qedhere
\end{example*}

\section{Discrepancy estimate of normal numbers}

Normality can be expressed
 in the theory of  uniform distribution of sequences modulo~$1$.
A number is normal in a given base $b$ if the scaling of the number by powers of $b$ is uniformly distributed modulo $1$, \cite{Wall}.
 The rate at which a number approaches normality in a base $b$ is given by  the classical notion of  discrepancy.
 
The discrepancy of a sequence $(x_n)_{n\in\N} \subset [0,1)$  measures how far is the sequence from being equidistributed in the unit interval.

\begin{definition*}[Discrepancy of a sequence]
For 
a sequence $( x_{n})_{n \in \N}\subset [0,1)$
the \textit{discrepancy} of it  first $N$ terms~is
    \[
    D \left( (x_n)_{n \in \N} \:, N \right) = \sup_{0\leq a<b<1} 
    \left| \frac{\# \{ n : x_n\in[a,b), 1\leq n\leq N\}}{N} - (b-a) \right|.
    \]
\end{definition*}

For normality in base $10$ we consider for each real number, the scaling of the number by increasing powers of $10$.
If the expansion of $\alpha$ in base~$10$ is given by $\alpha = 0.\alpha_1 \: \alpha_2 \: \alpha_3 \ldots$, we consider the sequence $(x_n)_{n \in \N}$  where
 \begin{alignat*}{3}
 x_1 \quad &= \quad 10^0 \alpha \mod 1 &&= \quad 0.\alpha_1 \: \alpha_2 \: \alpha_3 \ldots \\
 x_2 \quad &= \quad 10^1 \alpha \mod 1 &&= \quad 0.\alpha_2 \: \alpha_3 \: \alpha_4 \ldots \\
 &\:\:\vdots \\
 x_n \quad &= \quad 10^{n-1} \alpha \mod 1 \quad &&= \quad 0.\alpha_n \: \alpha_{n+1} \: \alpha_{n+2} \ldots 
\end{alignat*}

\begin{definition*}[Discrepancy of a number for base $10$]
\mbox{For $\alpha\in[0,1)$,  its discrepancy for base $10$ is}
\[
 D(\alpha, N) = D\left((10^{n-1} \alpha \mod 1)_{n \in \N} \:, N\right).
\]
\end{definition*}
\begin{proposition*} 
A real number $\alpha$ is normal in base $10$ if and only if $\lim_{N \to \infty} D(\alpha, N) = 0$.
\end{proposition*}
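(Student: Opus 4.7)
The plan is to hinge both directions on the concrete link between half-open intervals in $[0,1)$ and cylinder sets of digit blocks. For a block $B=(b_1\ldots b_k)$ set $I_B = [0.b_1\ldots b_k,\, 0.b_1\ldots b_k + 10^{-k})$, an interval of length $10^{-k}$; the point is that $x_n \in I_B$ if and only if $\alpha_n\ldots\alpha_{n+k-1} = b_1\ldots b_k$, so
\[
\bigl|\#\{1 \le n \le N : x_n \in I_B\} - \Nu(\alpha,B,N)\bigr| \le k-1,
\]
the error coming from the boundary indices $n > N-k+1$.

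For the implication from $\lim_{N} D(\alpha,N)=0$ to normality, fix any block $B$ of length $k$ and apply the definition of discrepancy to $I_B$; the estimate above gives
\[
\left|\tfrac{\Nu(\alpha,B,N)}{N} - 10^{-k}\right| \le D(\alpha,N) + \tfrac{k-1}{N} \xrightarrow[N\to\infty]{} 0,
\]
yielding the required asymptotic frequency for $B$.

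For the converse, assume $\alpha$ is normal and fix $\varepsilon > 0$. Choose $k$ with $2\cdot 10^{-k} < \varepsilon/2$. An arbitrary interval $[a,b) \subseteq [0,1)$ is sandwiched between two unions $U^- \subseteq [a,b) \subseteq U^+$ of length-$k$ cylinders $I_B$ with $|U^+ \setminus U^-| \le 2\cdot 10^{-k}$. Since normality gives $\Nu(\alpha,B,N)/N \to 10^{-k}$ for each of the $10^k$ blocks of length $k$, I pick $N_0$ uniformly so that every such frequency is within $\varepsilon/(2\cdot 10^k)$ of $10^{-k}$ for all $N\ge N_0$. Summing these errors over the cylinders inside $U^\pm$ and using the sandwich yields $\bigl|\#\{n \le N : x_n\in[a,b)\}/N - (b-a)\bigr| < \varepsilon$ uniformly in $[a,b)$, which bounds $D(\alpha,N)$. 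The main obstacle is precisely this uniformity: the discrepancy is a supremum over all subintervals, while normality is pointwise in the block $B$. Fixing the cylinder scale $k$ first collapses the supremum to a finite family of $10^k$ cylinders, on which normality can be applied simultaneously.
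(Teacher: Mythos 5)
Your proof is correct, and it is the standard argument (this equivalence is Wall's theorem). Note that the paper does not actually prove this proposition: it states it as a known fact, with the citation to Wall in the preceding paragraph, and the long worked example that follows it in the text only establishes the dictionary you start from, namely that $x_n\in I_B$ exactly when $B$ occurs at position $n$, so that $\#\{n\le N: x_n\in I_B\}=\Nu(\alpha,B,N+k-1)$. Your two directions --- reading off block frequencies from the discrepancy of cylinder intervals, and conversely sandwiching an arbitrary $[a,b)$ between unions of length-$k$ cylinders and using that normality lets you pick $N_0$ uniformly over the finitely many blocks of length $k$ --- are exactly how one fills in the omitted proof, so you are supplying something the paper leaves to the literature rather than diverging from it. Two small points to tidy: the supremum in the paper's definition of discrepancy runs over $0\le a<b<1$, so the cylinder $[1-10^{-k},1)$ is not literally an admissible $[a,b)$ and should be handled by passing to the complement $[0,1-10^{-k})$; and in the converse direction the counts $\#\{n\le N: x_n\in I_B\}$ differ from $\Nu(\alpha,B,N)$ by the same boundary term of at most $k-1$ per block, which contributes $O(10^k k/N)$ in total and is absorbed by enlarging $N_0$ since $k$ is fixed before $N$.
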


\begin{example*}
    Let $\alpha \in [0,1)$ and let $(\alpha_i)_{i\in\N}$ the sequence of digits in its decimal expansion.
   Partition the unit interval $[0,1)$ into  intervals of size $1/10$.
To  ask in which of those ten intervals  is $\alpha$ 
is equivalent to determine  $\alpha_1$,
\[  
\begin{array}{lcl}
\alpha \in  \left[0. \frac{1}{10} \right) &\iff& \alpha_1 = 0 
        \\
\alpha \in  \left[ \frac{1}{10}, \frac{2}{10} \right) &\iff &\alpha_1 = 1 
        \\
\alpha \in  \left[ \frac{2}{10}, \frac{3}{10} \right) &\iff &\alpha_1 = 2
        \\
 \quad \vdots
        \\
\alpha \in  \left[ \frac{9}{10}, 1 \right) &\iff &\alpha_1 = 9.
    \end{array}
\]
For instance,
   \begin{center}
    \begin{tikzpicture}       
         \draw[|-|] (0,0) node [below=1mm] {$0$} -- (10,0) node [below=1mm] {$1$};

        \foreach \x in {1,2}
            \draw (\x,0.08)--(\x,-0.08) node [below=1mm] {\small $\frac{\x}{10}$};

        \foreach \x in {5,...,9}
            \draw (\x,0.08)--(\x,-0.08) node [below=1mm] { \small $\frac{\x}{10}$};

        \filldraw [black] (3.25, 0) circle (1pt) node [above=1mm] { $\alpha=0.3\ldots$}; 

        \foreach \x in {3, 4}
            \draw [very thick, teal] (\x,0.1)--(\x,-0.1) node [below=1mm] {$\mathbf{\frac{\x}{10}}$};
    \end{tikzpicture} 
   \end{center}
Recall that for rational numbers having two expansions  we commit to the expansion ending with 
an infinite  tail of $0$s instead of the infinite  tail of  $9$'s.
This is justified because  we are working with intervals that include the left point,
For example, $\alpha = 0.30000 = 0.2999 \ldots$  and  
  $\alpha \in [3/10, 4/10)$,  so it is justified that we chose the expansion that sets $\alpha_1 = 3$.

If  instead of partitioning $[0,1)$ in $10$ intervals, 
we partition it in  $100$ intervals of size $1/10^2$, 
then to find out the subinterval containing  $\alpha$ we need to determine
the first two digits of its decimal expansion, $\alpha_1$ and $\alpha_2$. For instance,
    \[
    \alpha \in \left[\frac{32}{10^2}, \frac{33}{10^2} \right) \iff \alpha_1 = 3 \: \text{ and } \: \alpha_2 = 2. 
    \]
    \begin{center} 
    \begin{tikzpicture}
        
        \draw[|-|] (0,0) node [below=1mm] {$0$} -- (10,0) node [below=1mm] {$1$};

        \foreach [evaluate=\y using 0.1*\x] \x in {1,...,99}
            \draw (\y,0.08)--(\y,-0.08);

        \filldraw [black] (3.25, 0) circle (1pt) node [above=1mm] {$\alpha=0.32\ldots$};

        \foreach [evaluate=\y using 0.1*\x] \x in {30, 40}
            \draw [very thick, teal] (\y,0.1)--(\y,-0.1) node [below=1mm] {\small $\mathbf{\frac{\x}{10^2}}$};

        \foreach [evaluate=\y using 0.1*\x] \x in {32, 33}
            \draw [very thick, orange] (\y,0.1)--(\y,-0.1);
    \end{tikzpicture}
    \end{center}
For each $n \in \{1, \ldots, N\}$, we need to identify the interval  of size of size $1/10^{n}$ that contains 
 $x_n = 10^{n-1} \alpha \mod 1$, 
 For instance, partition  the interval $[0,1)$ into $1000 $ 
intervals of size $1/10^3$ and consider one of them,
    \[
    I = \left[\frac{325}{10^3}, \frac{326}{10^3}\right).
    \]
    Since $x_n = 10^{n-1} \alpha \mod 1 = 0.\alpha_n \: \alpha_{n+1} \: \alpha_{n+2} \ldots$, we have that
    \begin{align*}
    x_n \in I & \iff \alpha_n = 1 \text{ and } \alpha_{n+1} = 3 \text{ and } \alpha_{n+2} = 0 
\\& \iff \alpha = 0. \alpha_1 \ldots \alpha_{n-1} \: 3 \: 2 \: 5 \: \alpha_{n+3} \ldots.
    \end{align*}
    \begin{center}
    \begin{tikzpicture} 
        
        \draw[-] (0,0) -- (10,0);

        \draw [very thick, teal] (0,0.1)--(0,-0.1) node [below=1mm] {\small $\mathbf{\frac{3}{10}}$};

        \draw [very thick, teal] (10,0.1)--(10,-0.1) node [below=1mm] {\small $\mathbf{\frac{4}{10}}$};

        \draw [very thick, orange] (2,0.1)--(2,-0.1) node [below=1mm] {\small $\mathbf{\frac{32}{10^2}}$};

        \draw [very thick, orange] (5,0.1)--(5,-0.1) node [below=1mm] {\small $\mathbf{\frac{33}{10^2}}$};

        \filldraw [black] (3.25, 0) circle (1.5pt) node [above=1mm] { $x_n=0.325\ldots$};

        \draw [very thick, violet] (3,0.1)--(3,-0.1) node [below=1mm] {\small $\mathbf{\frac{325}{10^3}}$ };

        \draw [very thick, violet] (4,0.1)--(4,-0.1) node [below=1mm] {\small $\mathbf{\frac{326}{10^3}}$ };
    \end{tikzpicture}
    \end{center}
    That is, $x_n$ belongs to the interval $I$ if and only if
 there is an occurrence of the block $B = (3 \: 2 \: 5)$ in the digit number $n$ of $\alpha$. 
Thus, if we take $k = 3$, the length of the block $B$, then
    \begin{align*}
        \# \{ n :  x_n\in I, 1\leq n\leq N\} 
        &= \# \{ n : (\alpha_n \: \alpha_{n+1} \: \alpha_{n+2})
        = (3\:2\:5), 1\leq n\leq N \} \\
        &= \Nu(\alpha,B, N+k-1).
    \end{align*}\flushright{\qedhere}
 \end{example*}

We use Landau's notation to make estimates.  
For functions  $f$ and $g$ over the real numbers, and $g$ strictly positive, we write $f(x)=O(g(x))$ if there exists a positive constant~$C$ and a value~$x_0$ such that for all $x>x_0$,
$|f(x)|<Cg(x)$. And we write
$f(x)=o(g(x))$
if for all positive constants $C$, there exists $x_0$ such that for every $x\geq x_0$,$|f(x)|<Cg(x)$. 

Schiffer in~\cite{Schiffer}  gives  the discrepancy of  numbers in a large family,
 but of  Lebesgue measure zero. 
He proves that for any non-constant polynomial $f$ with 
rational coefficients such that $f(n) \in \N$ for all $n \in \N$, the discrepancy of  the real number $\alpha$ 
whose decimal expansion is formed by the concatenation of the values of $f$ 
evaluated on the positive integers, 
\[
\alpha = 0.f(1)f(2)f(3)\ldots
\]
satisfies the following:
There are two constants $K_1$ and $K_2$ such that, 
there are cofinitely 
many $N$ for which  $D(\alpha, N) < K_1/{\log N}$; 
and there are  infinitely many $N$ for which $D(\alpha, N) > {K_2}/{\log N}$.
Nakai and Shiokawa in~\cite{NS} generalize this result for   non-constant $f$ with real coefficients, such that $f(t)>0$ for all $t>0$.
Schiffer's result applies to the Champernowne constant taking the polynomial $f(x) = x$.
In this note we give  a discrete an elementary proof of of the exact discrepancy of the Champernowne number.

How  does it compare  the discrepancy of the Champernowne constant to  the discrepancy of other normal numbers?
The  minimum discrepancy achievable by a normal number is still not known.
The question was posed by Korobov in 1955~\cite{korobov}, see also Bugeaud's book \cite{Bugeaud}.
Without restricting to sequences of the form  $(b^n \alpha)_{n\geq1}$ the minimal discrepancy known:
Schmidt \cite{schmidt} proved  that  there is a constant $C$ such that for {\em all} sequences $(x_n)_{n\in\N}\subset [0,1)$ there are infinitely many $N$  where the discrepancy of the first $N$ terms, is above $C\log(N)/N$.
And the discrepancy of  first $N$ terms of the van der Corput sequences is $O((\log N)/N)$  so   this is  the minimal discrepancy that an arbitrary sequence $(x_n)_{n\geq 1}$ can have \cite{KN}.

 Surprisingly, almost all numbers in the sense of the Lebesgue measure are normal  with the same discrepancy. 
Gál and Gál~\cite{gal} proved an upper bound for the discrepancy 
of almost all numbers with respect to 
Lebesgue's measure. 
Philipp~\cite{Philipp} gave the explicit constants, 
and Fukuyama~\cite{fukuyama} refined them, 
obtaining that for every base  $b > 1$, 
there exists a constant $K_b$ such that for almost every real number $\alpha$,
\[
\limsup_{N \to \infty} \frac{\sqrt{N} \: D\left((b^n \alpha \mod 1)_{n \geq 0} \: , N \right)}{\sqrt{\log(\log N)}} = K_b.
\]
That is to say that for almost every real number $\alpha$,
\[
D(\alpha, N)=O \left(\frac{\sqrt{\log(\log N)}}{\sqrt{N}} \right).
\]

 Gál and Gál showed that 
 the discrepancy of almost all numbers is below 
 the law of the iterated logarithm. This, in turn, is below the  of discrepancy of  Schiffer's numbers. 
Thus, Champernowne constant approaches  normality  much slower than almost every number.

The set of bases to which a real number can be normal is not tied to any arithmetical properties other than multiplicative dependence 
(for integer bases $r$ and $s$, if $ r^n = s^m $ for some $m,n\in\N$,   a number is normal in base $r$  exactly when  it is normal to base $s$).
For any given set of bases closed under multiplicative dependence, there are real numbers that are normal to each base in the given set, but not  normal to any base in its complement. In \cite[Theorem 2.8]{BS} Becher and Slaman show that the  discrepancy functions for multiplicatively independent bases are pairwise independent.

\section{Statement of results}

Here we  give a  discrete and elementary proof 
of the exact discrepancy of the Champernowne  constant $c_{10}$.
Schiffer \cite{Schiffer} provided the discrepancy of a family of numbers that includes~$c_{10}$. His  proof relies on exponential sums. 
 This is and alternative  proof of   Schiffer's result  specifically for the  Champernowne constant.

\begin{theorem} \label{thm:1}
Let $c_{10}$ be the Champernowne constant for base $10$.
Then, there is a constant $K_1 > 0$ such that there are cofinitely many N such that
$
D(c_{10}, N) < K_1/{\log N}.
$
\end{theorem}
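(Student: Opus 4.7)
The plan is to bound $D(c_{10},N)$ by reducing it to counting block occurrences at an appropriate scale $k$, and then to exploit the fact that the intervals whose discrepancy we are measuring correspond to contiguous ranges of blocks, which yields cancellations.

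Fix $k$ (to be chosen). For each $p \in \{0, 1, \ldots, 10^k - 1\}$, let $I_p = [p/10^k, (p+1)/10^k)$ and let $B_p$ be the $k$-digit base-$10$ representation of $p$, padded with leading zeros. The identification from the introductory example gives $\#\{n \leq N : x_n \in I_p\} = \Nu(c_{10}, B_p, N+k-1)$. Sandwiching an arbitrary $[a,b) \subset [0,1)$ between the largest union of $I_p$'s contained in it and the smallest union containing it yields
\[ D(c_{10}, N) \leq \max_{S \text{ contiguous}} \frac{1}{N}\left|\sum_{p \in S} E_{B_p}\right| + 2 \cdot 10^{-k}, \]
where $E_B := \Nu(c_{10}, B, N+k-1) - N/10^k$ and $S$ ranges over contiguous subranges of $\{0, 1, \ldots, 10^k - 1\}$.

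To estimate $E_B$ for a fixed $B = b_1 \ldots b_k$, let $L \approx \log_{10} N$ denote the number of digits of the integers near position $N$, and split occurrences of $B$ in $c_{10}$ into two classes: those entirely within a single integer and those spanning the boundary between two consecutive integers. For the within-integer class, count exactly: for each $\ell$-digit integer with $\ell \in \{k, \ldots, L\}$, the per-$\ell$ deviation from the uniform expectation $9(\ell-k+1) \cdot 10^{\ell-k-1}$ evaluates to $10^{\ell - k - 1}$ when $b_1 \neq 0$ and $-9 \cdot 10^{\ell - k - 1}$ when $b_1 = 0$, the asymmetry reflecting the prohibition of leading zeros. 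For the boundary-spanning class, parameterize by the split $j \in \{1, \ldots, k-1\}$ (number of digits of the first integer used): in the main case ($\ell > k$ with no carry), the count of $\ell$-digit $n$'s whose last $j$ digits are $b_1 \ldots b_j$ and whose successor $n+1$ begins with $b_{j+1} \ldots b_k$ equals $10^{\ell - k}$ if $b_{j+1} \neq 0$ and $0$ otherwise, with $O(1)$ carry corrections per length $\ell$. Combining and simplifying, up to lower-order terms,
\[ \sum_{p \in S} E_{B_p} = \frac{10}{9} \cdot 10^{L-k} \sum_{j=1}^{k}\left(\frac{\#S}{10} - n_j\right), \]
where $n_j := \#\{p \in S : b_j(p) = 0\}$ and $b_j(p)$ is the $j$-th base-$10$ digit of $p$ padded to length $k$.

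The heart of the argument is bounding each $|n_j - \#S/10|$ using the contiguity of $S$. For $j = 1$, the set $\{p : b_1(p) = 0\} = [0, 10^{k-1})$ is itself an interval, so $|n_1 - \#S/10| = O(10^{k-1})$. For $j \in \{2, \ldots, k\}$, the indicator $\mathbf{1}[b_j(p) = 0]$ is periodic in $p$ with period $10^{k-j+1}$ and density $1/10$, so only partial periods at the two endpoints of $S$ contribute error, giving $|n_j - \#S/10| = O(10^{k-j})$. Summing yields $\sum_{j=1}^{k} |n_j - \#S/10| = O(10^{k-1})$, and therefore $|\sum_{p \in S} E_{B_p}| = O(10^{L-k} \cdot 10^{k-1}) = O(10^L)$. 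Dividing by $N \sim L \cdot 10^L$ gives $O(1/L) = O(1/\log N)$. Choosing $k = \lceil \log_{10} L \rceil$ also makes the boundary term $10^{-k} = O(1/L)$, and we conclude $D(c_{10}, N) = O(1/\log N)$ for all sufficiently large $N$---that is, cofinitely many $N$.

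The main obstacle is the careful combinatorial bookkeeping in the counting step, especially the carry analysis for spanning occurrences; the key insight is that the contiguity of $S$ reduces the bound by a factor of $\log \log N$ relative to the naive $\ell^1$ estimate obtained by summing $|E_B|$ block-by-block.
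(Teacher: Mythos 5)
Your approach is genuinely different from the paper's. The paper proves a per-block estimate $\Nu(c,B,N)=10^{-k}N+O(10^{n-k})$ for each fixed block length $k$ (Lemma~\ref{lem:1}), then decomposes an arbitrary interval $[0,\gamma)$ into at most nine subintervals at each scale $10^{-k}$ for $k=1,\dots,h$ and sums the resulting errors $O(10^{-k}/\log N)$ as a convergent geometric series, finally approximating arbitrary endpoints to precision $10^{-h}$ with $h=\lfloor\log\log N\rfloor$. You instead work at a single scale $k\approx\log_{10}\log_{10}N$ and exploit cancellation of the signed deviations over a contiguous range $S$ of blocks. Your diagnosis of why this cancellation (or the paper's multi-scale decomposition) is needed is correct: the per-block deviation carries a factor of $k$ (from the $k-1$ possible splits of a spanning occurrence and from the $(k-1)$-position deficit of within-integer starting positions), so the naive $\ell^1$ sum over the $10^k$ blocks loses a factor of $\log\log N$. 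Your main-term identity and the periodicity bound $\sum_{j}|n_j-\#S/10|=O(10^{k-1})$ are correct and would close the argument --- but only for those special $N$ that fall exactly at the end of a segment $s_L$.

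The gap is the general $N$. Your count runs over complete segments $\ell=k,\dots,L$, but for arbitrary $N$ the last segment is truncated at the term $v=v(N)$, and this partial segment accounts for a share of $N$ of the same order as everything preceding it (the quantity $M$ can dominate $L$). Its deviation does not have the clean form $10^{L-k}(\#S/10-n_j)$: it depends on the digits of $v$, and this is exactly where the paper's Lemma~\ref{lem:1} spends most of its effort (the analysis of $\Nu_{no}(B,v,j)$ via the quantities $a_j$ and $\theta_j$). Moreover the $k$-uniformity problem recurs there: the spanning occurrences inside the truncated segment again contribute a per-block deviation of order $(k-1)10^{L-k}$, so you cannot bound the partial segment crudely and add it on; you must run the same signed, contiguity-based cancellation over the truncated range $[10^{L-1},v]$, now with extra boundary errors coming from the cutoff at $v$. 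This is doable but it is the bulk of the technical work, and it is entirely absent from your writeup; the phrase ``up to lower-order terms'' is hiding a term of main-term size. A secondary point: since your $k$ grows with $N$, every implied constant must be checked to be uniform in $k$ as well as in $B$, whereas the paper only ever applies its counting lemma at fixed $k$.
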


\begin{theorem} \label{thm:2}
 Let $c_{10}$ be the Champernowne constant for base $10$. Then, there exists a constant $K_2 >0$ such that for infinitely many~$N$,
$
 D(c_{10}, N) > {K_2}/{\log N}.
$
In particular,  one can take  $K_2 = 1/( 10^3 3)$.
\end{theorem}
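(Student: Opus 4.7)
The plan is, for each $k\ge 1$, to take $N=L_k$ equal to the total number of digits produced by concatenating $1,2,\dots,10^k-1$, and the interval $I=[0,1/10)$. By the Example preceding the theorem, applied to the length-$1$ block $B=(0)$, the count $\#\{n\le L_k:x_n\in I\}$ equals the number of zero digits among $\alpha_1,\dots,\alpha_{L_k}$. A standard geometric-series manipulation gives the closed form
\[
L_k\;=\;\sum_{j=1}^{k}j\cdot 9\cdot 10^{j-1}\;=\;\frac{(9k-1)\,10^k+1}{9}.
\]

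Next I count the zeros. A zero in the first $L_k$ digits can only come from a non-leading digit of a $j$-digit number with $2\le j\le k$. Among the $9\cdot 10^{j-1}$ $j$-digit numbers, each of the $j-1$ non-leading positions contains a zero in exactly $9\cdot 10^{j-2}$ of them, so the total number of zeros is
\[
Z_k\;=\;\sum_{j=2}^{k}(j-1)\cdot 9\cdot 10^{j-2}\;=\;L_{k-1}.
\]
Substituting the closed forms for $L_k$ and $L_{k-1}$ and simplifying yields the clean identity $Z_k-L_k/10=(1-10^k)/10$, and therefore
\[
D(c_{10},L_k)\;\ge\;\left|\frac{Z_k}{L_k}-\frac{1}{10}\right|\;=\;\frac{10^k-1}{10\,L_k}.
\]

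Now, using the easy bounds $L_k\le k\cdot 10^k$ and $L_k\ge 9\cdot 10^{k-1}$ that follow directly from the definition, the right-hand side is at least $9/(100k)$, while $\log L_k\ge \log 9+(k-1)\log 10\ge 2k$ for every $k\ge 1$ (natural logarithm). Combining these,
\[
D(c_{10},L_k)\cdot\log L_k\;\ge\;\frac{9}{100\,k}\cdot 2k\;=\;\frac{18}{100}\;>\;\frac{1}{3\cdot 10^3}\;=\;K_2,
\]
which gives $D(c_{10},L_k)>K_2/\log L_k$ for every $k\ge 1$, and hence for infinitely many $N$.

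The only step that requires real attention is the algebraic identity $Z_k-L_k/10=(1-10^k)/10$; once it is established, the rest is routine arithmetic. The constant $K_2=1/(3\cdot 10^3)$ is a deliberately loose but convenient value: the true asymptotic of $D(c_{10},L_k)\cdot\log L_k$ produced by this argument is $(\log 10)/10\approx 0.23$, two orders of magnitude above $K_2$, so the stated bound has ample slack and no analytic or exponential-sum machinery is required, in contrast with Schiffer's original proof.
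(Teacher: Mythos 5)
Your proof is correct, and it takes a genuinely different and more economical route than the paper's. The paper proves the lower bound through a comparison lemma (Lemma~2, after Schiffer): it exhibits two blocks of length $k=2$, $B_1=(1\,1)$ and $B_2=(0\,0)$, estimates their overlapping and non-overlapping occurrences segment by segment to show the counts differ by at least $N/(10^{k+1}\log N)$, and then converts that difference into a discrepancy bound at the cost of a further factor $1/3$ — which is exactly where the constant $1/(3\cdot 10^{3})$ comes from. You instead take the single interval $[0,1/10)$, i.e.\ the one-digit block $(0)$, evaluate the counting function exactly at the segment boundaries $N=L_k$ (where the zero count is cleanly $Z_k=L_{k-1}$ and the identity $Z_k-L_k/10=(1-10^{k})/10$ holds), and compare directly against the ideal frequency $1/10$ rather than against a second block. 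Both arguments ultimately exploit the same phenomenon — leading digits are never zero, so zeros are in deficit — but your version needs no Lemma~2 and no analysis of overlapping occurrences (a length-one block has none), and it delivers the stronger conclusion $D(c_{10},L_k)\cdot\log L_k\ge 18/100$, which a fortiori gives the stated $K_2=1/(3\cdot 10^{3})$. I checked the computations: $L_k=\bigl((9k-1)10^{k}+1\bigr)/9$, $Z_k=L_{k-1}$, the identity, and the final bounds $D(c_{10},L_k)\ge 9/(100k)$ and $\log L_k\ge 2k$ are all valid for every $k\ge 1$. The one point worth stating explicitly in a final write-up is that $x_n\in[0,1/10)$ if and only if $c_n=0$, which uses that the expansion of $c_{10}$ has no infinite tail of nines.
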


\begin{remark*}
 Theorems \ref{thm:1} and \ref{thm:2} imply that $D(c_{10}, N) = O \left({1}/{\log N} \right)$ and $D(c_{10}, N) \neq o \left({1}/{\log N}\right)$. 
That is, the estimate cannot be improved.
\end{remark*}

\begin{remark*}
Theorems~\ref{thm:1} and~\ref{thm:2} 
hold for any other integer base $b\geq 2$ with the 
definition of  Champernowne constant $c_b$ 
for that base (the concatenation of all positive integers represented in that base).  
The constants $K_1,K_2$ depend on the base $b$.
\end{remark*}

\section{Basic tools}

In the sequel the Champernowne constant is called $c=c_{10}$,
\[
c=0.12345678910111213141516171819202122\ldots
\]
We define three sequences.

\begin{definition*} 
 Let  $(t_i)_{i \in \N}$, where $t_i = i$, be the sequence of terms that concatenated  yield the expansion of $c$.
\end{definition*}

\begin{definition*} 
Let   $(c_i)_{i\in\N}$,  where each $c_i\in\{0,..,9\}$, 
be the decimal expansion of $c$.
\end{definition*}

For example, $c_{11} = 0$ and $c_{14} = 1$, because
 \[ c = 0. \! \underset{\substack{\uparrow \\ c_1}}{1} \overset{\substack{c_2 \\ \downarrow}}{2} \: 3 \: 4 \: 5 \: 6 \: 7 \: 8 \:9 \underset{\substack{\uparrow \\ c_{10}}}{1} \! \! \! \overset{\substack{c_{11} \\ \downarrow}}{0} 11 \underset{\substack{\uparrow \\ c_{14}}}{1} \! 2 \: 13 \: 14 \:15 \:16 \ldots.
   \] 

\begin{definition*} 
Let  $(x_n)_{n \in \N} = (10^{n-1} c \mod 1)_{n \in \N}$. 
\end{definition*}
Thus,
$        x_1 = 0. 1 \: 2 \: 3 \: 4 \: 5 \ldots ,
        x_2 = 0. 2 \: 3 \: 4  \:5 \: 6 \ldots,
\ldots 
        x_n = 0. c_n \: c_{n+1} \: c_{n+2} \ldots
$

\begin{definition*}[Overlapping  occurrences]
An occurrence of $B$ in $c$ is \emph{overlapping} if $B$ occurs between two or more $t_i$.
 We denote $\Nu_o(c, B, N)$ to the number of overlapping occurrences of $B$ in $(c_1 \: c_2 \ldots c_N)$.
\end{definition*}

\begin{definition*}[Non-overlapping  occurrences]
An occurrence of $B$ in $c$ \emph{non-overlapping}, if $B$ occurs within a single term $t_i$.
 We denote $\Nu_{no}(c, B, N)$ to the number of non-overlapping occurrences of $B$ in $(c_1 \: c_2 \ldots c_N)$.
\end{definition*}

\begin{example*}
    For $N = 36$ and $B = (1\:2)$. Then, 
    $\Nu_{no}(c, B, N) = 1$, because 
    \[
    c = 0. 1 \: 2 \: 3 \: 4 \: 5 \: 6 \: 7 \: 8 \: 9 \:10 \:11 \: \mathbf{12} \: 13 \: 14 \: 15 \: 16 \: 17 \: 18 \: 19 \: 20 \: 21 \: 22 \underset{\substack{\uparrow \\ x_N}}{2}\!3 \: 24 \ldots
    \]
And    $\Nu_o(c, B, N) = 2$, because
    \[
    c = 0.\mathbf{1 \: 2} \: 3 \: 4 \: 5 \: 6 \: 7 \: 8 \: 9 \:10 \:11 \: 12 \: 13 \: 14 \: 15 \: 16 \: 17 \: 18 \: 19 \: 20 \: 2\mathbf{1 \: 2} 2 \underset{\substack{\uparrow \\ x_N}}{2}\!3 \: 24 \ldots.
    \]
\flushright{\qedhere}\end{example*}

\begin{remark*}
    $\Nu(c,B, N) = \Nu_{no}(c, B, N) + \Nu_o(c, B, N)$.
\end{remark*}

\begin{definition*}[segment $s_\ell$]
    Given $\ell \in \N$,  $s_\ell$ is 
the concatenation of terms  
formed by all natural numbers of $\ell$ digits ordered in ascending order, that is, from $10^{\ell - 1}$ to $10^{\ell} - 1$,
\[
s_\ell=(\overbrace{10\ldots0}^{\ell \text{ digits}}, \: \ldots \: , \overbrace{9\ldots9}^{\ell \text{ digits}}).
\]
\end{definition*}

\begin{example*}
    $s_1 = (1 \: 2 \: 3 \: 4 \: 5 \: 6 \: 7 \: 8 \: 9)$,
    $s_2 = (10 \: 11 \: 12 \ldots 98 \: 99)$,
    $s_3 = (100 \: 101 \ldots 998 \: 999)$.
\end{example*}

\begin{definition*}[The numbers $v=v(N)$ and $n=n(N)$]
For  $N \in \N$, let $v=v(N)$ be such that $T(v) \geq N$ and $T(v-1) < N$. 
Let  $n=n(N)$ the number of digits of $v$.
\end{definition*}

\begin{definition*}
For  $v \in \N$,  $T(v)$ is the number of digits in the  expansion of $c$ up to the term $v$.
\end{definition*}

\begin{example*}
For $v = 11$,  $T(v) = 13$, since 
     there are $13$ digits in $ 1 \: 2 \: 3 \: 4 \: 5 \: 6 \: 7 \: 8 \: 9 \: 10 \: 11$.
\end{example*}

\begin{example*}
     For $N = 14$, then $v = 12$ and $n = 2$, since $T(v) = 15 \geq N$ and $T(v-1) = 13 < N$.
     \[
     c = 0. 1\:2\:3\:4\:5\:6\:7\:8\:9 \:10\:11 \! \overbracket{\underset{\substack{\uparrow \\ c_N}}{1} \! 2}^{v} 13\ldots 
     \]\hfill{\qedhere}
\end{example*}

\begin{definition*}[Lengths $L$ and $M$]
Given $N$, 
we define $L=L(N)$ as  number of digits from $s_1$ to $s_{n-1}$,
    \[
     L = \sum _{j=1}^{n-1} j \cdot 9 \cdot 10^{j-1}.
     \]
We define  $M=M(N)$ 
as  the number of digits within $s_n$ to the number $v$.
     \[
     M = n(v - 10^{n-1} + 1) = n \left( \sum \limits_{i=1}^n v_i 10^{n-i} - 10^{n-1} + 1 \right ).
     \]
 \end{definition*}

\begin{remark*}
Given $n\in \N$, let $L=L(N),M=M(N), n=n(N)$. Then,
     \begin{align}\label{eq:N}
     L \leq N \leq L + M = \sum _{j=1}^n 9 j 10^{j-1 } = n 10^n - \frac{10^n}{9} + \frac{1}{9}.
     \end{align}
\end{remark*}

    \begin{definition*}
$\Nu_{no}(B, v)$ is the number of non-overlapping occurrences of $B$ in all $n$ digit terms less than or equal to $v$, where $n$ is the number of digits of $v$. 
     \end{definition*}

    \begin{definition*}
$\Nu_{o}(B, v)$ is the number of overlapping occurrences of $B$ in all $n$ digit terms less than or equal to $v$, where $n$ is the number of digits of $v$. 
     \end{definition*}

   \begin{definition*}
For $\ell \in \N$, $\U_{no}(B, s_\ell)$ is the number 
of non-overlapping occurrences of $B$ in $s_{\ell}$. 
     \end{definition*}

          \begin{definition*}
     For$\ell \in \N$,  $\U_o(B, s_\ell)$ is the number of overlapping occurrences in $s_{\ell}$. 
     \end{definition*}

\section{Theorem~\ref{thm:1}: Upper bound}

We  give an upper bound, for every $N$, of $    D(c, N)$.

\subsection{Counting occurrences}
We start with the following lemma.

\begin{lemma} \label{lem:1}
     Given $N \in \N$ and $B$ a block of length $k>1$, then
     \[\Nu(c,B, N) = 10^{-k}N + O(10^{n-k}),
     \]
     where the hidden constant in $O(10^{n-k})$ does not depend on $B$.
\end{lemma}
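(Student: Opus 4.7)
The plan is to decompose $\Nu(c, B, N) = \Nu_{no}(c, B, N) + \Nu_o(c, B, N)$ and simultaneously to split the first $N$ digits of $c$ into the complete segments $s_1, \ldots, s_{n-1}$ (containing $L$ digits) together with the prefix of $s_n$ running from $10^{n-1}$ up to the term $v$ (containing at most $M$ further digits). Since the term $v$ contributes at most $n$ digits past the cutoff $N$, one may replace $N$ by $T(v) = L + M$ in all expressions at the cost of an $O(n)$ error, absorbed by $O(10^{n-k})$ in the regime $k \leq n$. It therefore suffices to evaluate the counts exactly up through the term $v$ and to match the result with $10^{-k}(L+M)$ up to $O(10^{n-k})$.

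For the non-overlapping count inside a complete segment $s_\ell$ with $\ell \geq k$, I fix each starting position $i \in \{1, \ldots, \ell - k + 1\}$ inside an $\ell$-digit number $w_1 \ldots w_\ell$ with $w_1 \neq 0$, impose $w_i \ldots w_{i+k-1} = b_1 \ldots b_k$, and count the free remaining digits. Position $i = 1$ yields $10^{\ell - k}$ numbers when $b_1 \neq 0$ (zero otherwise); each position $i \geq 2$ yields $9 \cdot 10^{\ell - k - 1}$ numbers (nine choices for $w_1$, ten for the others). For $\ell < k$ the count is $0$. Summing the resulting closed form over $\ell = k, \ldots, n - 1$ telescopes to $(n - k) \cdot 10^{n - k - 1}$ up to an $O(10^{n-k})$ tail, and comparing with $10^{-k} L = (n - 1) \cdot 10^{n - k - 1} + O(10^{n - k})$ leaves a gap of $(k - 1) \cdot 10^{n - k - 1}$ that the overlapping count must fill. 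An analogous position-by-position count restricted to the terms of $s_n$ in $[10^{n-1}, v]$ yields $\Nu_{no}(B, v) = 10^{-k} M + O(10^{n - k})$.

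For the overlapping count the key observation is that whenever $\ell \geq k - 1$, an overlapping occurrence of $B$ in $s_\ell$ spans exactly two consecutive terms $w, w + 1$. At each of the $k - 1$ possible boundary starting positions, requiring $B$ to match both the suffix of $w$ and the prefix of $w + 1$ pins down at most $k$ digits of $w$, leaving $O(10^{\ell - k})$ admissible $w$ per position; carry propagation through trailing $9$'s only adds constraints on $w$ and so cannot increase the count. Thus $\Nu_o(B, s_\ell) = O(10^{\ell - k})$ uniformly, and summation over $\ell$ together with the partial segment and the $O(1)$ transition occurrences between $s_\ell$ and $s_{\ell+1}$ bounds the global overlapping contribution by $O(10^{n - k})$. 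The short segments with $\ell < k - 1$ together have total digit count $O(10^{k-1}) = O(10^{n-k})$, which trivially bounds their total occurrences.

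Adding everything, $\Nu(c, B, N) = 10^{-k}(L + M) + O(10^{n - k}) = 10^{-k} N + O(10^{n - k})$, where the implicit constant depends only on $k$ and not on the particular digits of $B$. The main technical obstacle I anticipate is the uniformity in the digits of $B$: one must verify that neither having $b_1 = 0$ (which voids the $i = 1$ non-overlapping contribution and forces some boundary positions to contribute $0$ because $w + 1$ cannot start with $0$) nor having long trailing runs of $9$'s in $B$ (which trigger carry propagation across the boundary and couple $w$'s leading digits to its trailing digits) degrades the stated bound. In each scenario the case analysis gives a count at most the generic bound, so uniformity is preserved.
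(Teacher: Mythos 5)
Your proposal is correct and follows essentially the same route as the paper: the same split into overlapping and non-overlapping occurrences, the same segmentation into the complete segments $s_k,\dots,s_{n-1}$ plus the partial segment of $s_n$ up to $v$, the same position-by-position count of non-overlapping occurrences (with the $b_1=0$ case treated separately), and the same $O(10^{\ell-k})$ per-boundary-position bound on overlaps. One phrase you should drop: the deficit $(k-1)\cdot 10^{n-k-1}$ is not ``filled'' by the overlapping count (that count is zero for $B=(0\cdots 0)$); rather, both the deficit and the overlap term are simply absorbed into $O(10^{n-k})$ for fixed $k$, which is exactly how the paper treats them.
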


\begin{proof}
     Let $N \in \N$ and $B = (b_1, \ldots, b_k)$. 
     We know that 
     \[
     \Nu(c,B, N) = \Nu_{no}(c, B, N) + \Nu_o(c, B, N).
     \]
     We separate  the proof into steps. 

     \textbf{Step 1:}
     We estimate $\Nu_{no}(c, B, N)$, the non-overlapping occurrences.  

     \textbf{Step 1.1:} 
For each $\ell\geq 1$ we show 
     \[
     \U_{no}(B, s_\ell) \leq 10^{\ell-k} + (\ell-k)\cdot 9 \cdot 10^{\ell-k-1}.
     \]
     
     If $\ell < k$, then $\U_{no}(B, s_\ell) = 0$, since $B$ does not fit within a block of $\ell$ digits.
     
     If $\ell \geq k$: We want to count the amount of numbers of the form:
     \[
     y = \underbrace{* \ldots * B * \ldots *}_{\ell \text{ digits}}
     \]
     where the asterisk $*$ represents any possible digit. 
     We  move the position of $B$ and count in each case:
 
$\bullet$ We count the amount of numbers of the form
         \[
         y_0 = B \underbrace{* \ldots *}_{\ell-k \: \text{digits}}
         \]
         that is, $\ell$ digit numbers that contain $B$ in the first position: 
         
         If $b_1 = 0$: There are 0 numbers of the form $y_0$, since no natural number begins with 0. 
         
         If $b_1 \neq 0$: There are $10^{\ell-k}$ numbers of the form $y_0$, so we can choose the last $\ell-k$ digits between 0 and 9.

$\bullet$         We count the amount of numbers of the form
         \[
         y_1 = \underbrace {*}_{1 \text{ digit}} \: B \underbrace{* \ldots *}_{\ell-k-1 \text{ digits}}
         \]
         that is, $\ell$ digit numbers that contain $B$ in the second position:
         There are $9 \cdot 10 \cdot 10^{\ell -k-1}$
         numbers of the form $y_1$.

$\bullet$       
We count the amount of numbers of the form
         \[
         y_2 = \underbrace {*\:*}_{2 \text{ digits}} B \underbrace{* \ldots *}_{\ell-k-2 \text{ digits}}
         \]
         that is, $\ell$ digit numbers that contain $B$ in the third position:
         There are $9 \cdot 10 \cdot 10^{\ell -k-2}$ numbers of the form $y_2$. Because:
         \begin{itemize}[label = $\circ$]
             \item $9$ is the number of values that the first digit can take (between $1$ and $9$ since it cannot take the value $0$).
             \item $10$ is the number of values that the second digit can take (between $0$ and 9).
             \item $10^{\ell - k - 2}$ is the number of values that the last $\ell - k - 2$ digits can take.
         \end{itemize}

$\bullet$  We count the amount of numbers of the form
         \[
         y_3 = \underbrace {*\:*\:*}_{3 \text{ digits}} B \underbrace{* \ldots *}_{\ell-k-3 \text{ digits}}
         \]
         that is, $\ell$ digit numbers that contain $B$ in the fourth position:
         There are $9 \cdot 10^2 \cdot 10^{\ell -k-3}$ numbers of the form $y_3$.
 
$\bullet$ 
     Continuing like this, we arrive at the last position:
We count the amount of numbers of the form
         \[
         y_{\ell-k} = \underbrace {*\ldots*}_{\ell-k \text{ digits}} B
         \]
         that is, $\ell$ digit numbers containing $B$ in position $\ell-k+1$ :
         There are $9 \cdot 10^{\ell-k-1}$ numbers of the form $y_{\ell - k}$.
         
Overall we obtain that for each $j \in \{1, \ldots, \ell - k\}$, the amount of numbers of the form $y_j$ is
     \[
     9 \cdot 10^{\ell - k - 1}.
     \]
     Therefore, 
     if $b_1 = 0$: 
     
     $\U_{no}(B, s_\ell) = \sum _{j=1}^{\ell-k} 9 \cdot 10^{\ell - k - 1} = (\ell -k)\cdot 9 \cdot 10^{\ell-k-1}$;
     \\
     if $b_1 \neq 0$:
     
     $\U_{no}(B, s_\ell) = 10^{\ell-k} + \sum _{j=1}^{\ell-k} 9 \cdot 10^{\ ell - k - 1} = 10^{\ell-k} + (\ell-k)\cdot 9 \cdot 10^{\ell-k-1}$. 
     \\
     Then, for all $\ell \geq k$,
     \[
     \U_{no}(B, s_\ell) \leq 10^{\ell-k} + (\ell-k)\cdot 9 \cdot 10^{\ell-k-1},
     \]

     \textbf{Step 1.2:}
We  show that
\[
      \Nu_{no}(c, B, N) = \sum_{\ell = k}^{n-1} \U_{no}(B, s_\ell) + \Nu_{no}(B, v) + O(n).
     \]
     To count non-overlapping occurrences of $B$ up to the position $N$,
     that is, $\Nu_{no}(c, B, N) $), 
we  count occurrences in $s_{\ell }$ for each $\ell \in \{1,\ldots,n-1\}$ that is, $\U_{no}(B, s_\ell)$,
and then count the occurrences in $s_n$ by cutting it at $v$ (that is  $\Nu_{no}(B, v)$).
 Finally, the $O(n)$ comes from substracting the possible occurrences of $B$ within $v$  
after the $N$'th digit, there could be at most $n-k$ of those occurrences. 
\medskip
  
    \begin{example*}
         For $N=9523$,  $v=2658$ and $n=4$,
         \[
         c = 0. \underbrace{1 \ldots 9}_{\substack{9\cdot1 \\ \text{digits}}}
         \underbrace{10 \ldots 99}_{\substack{90\cdot2 \\ \text{digits}}}
         \underbrace{100 \ldots 999}_{\substack{900\cdot3 \\ \text{digits}}}
         \underbrace{1000 \ldots 2658}_{\substack{1658\cdot 4 \\ \text{digits}}}
         \: \overbracket{2 \! \underset{\substack{\uparrow \\ c_N}}{6} \! 58}^{v}
         \]
         Let's take $B$ to be any two-digit block, that is, $k=2$. 
         To count non-overlapping occurrences up to $N$, we have
         \[
         \Nu_{no}(c, B, N) = \underbracket{\U_{no}(B, s_2)}_{\substack{\text{occurrences} \\ \text{in } s_2}} +
         \underbracket{\U_{no}(B, s_3)}_{\substack{\text{occurrences} \\ \text{in } s_3}} +
         \underbracket{\Nu_{no}(B, v)}_{\substack{\text{occurrences} \\ \text{in } s_n \text{ until } v}} \: \: + \! \! \!
         \underbracket{O(n)}_{\substack{\text{occurrences} \\ \text{after } c_N}}
         \]
         because
         \[
         c = 0. \overbracket{1 \ldots 9}^{s_1}
         \overbracket{10 \ldots 99}^{s_2}
         \overbracket{100 \ldots 999}^{s_3}
         1000 \ldots 2658
         \: \overbracket{2 \! \underset{\substack{\uparrow \\ c_N}}{6} \! 58}^{v}.
         \]
          Then, the procedure consists of first, counting the occurrences in $s_1$ 
(which are 0 because $k=2$), in $s_2$ and $s_3$; then count the occurrences
 from $1000$ to $2658$; and finally, subtract possible occurrences in the last two digits of $v$.       
     \end{example*}

     \textbf{Step 1.3:} Bound $\Nu_{no}(B, v)$. 
which  is the amount of numbers of the form
     \[
     y = \underbrace{* \ldots * B * \ldots *}_{n \text{ digits}} \qquad \text{with } y \leq v.
     \]
     
     If $n < k$: $\Nu_{no}(B, v) = 0$, because, $B$ does not fit inside blocks of length $n$.
   
     If $n \geq k$: 
     Let $v = \sum \limits_{i=1}^{n} v_i 10^{n-i} = v_1 \ldots v_n$.
     Given $j \in \{0, \ldots, n-k\}$, we define: 
     \[
     a_j = \sum \limits_{i=1}^{j} v_i 10^{j-i} = v_1 \ldots v_j\]
     Again, we  move the position of $B$ and count in each case: 
     Given $j \in \{0, \ldots, n-k\}$, let us call $\Nu_{no}(B, v, j)$ the amount of numbers of the form
     \[
     y_j = \underbrace{*\ldots*}_{j \text{ digits}} \: B \! \underbrace{*\ldots*}_{n - k - j \text{ digits}} \quad \text{with } y_j \leq v.
     \]
     
     {\em Case ${j=0}$}. We want to count the amount of numbers of the form
     \[
     y_0 = B \underbrace{*\dots*}_{n-k \: \text{digits}}
     \]
     We note that again, if $b_1 = 0$, then $\Nu_{no}(B, v, j) = 0$, since there are no numbers that begin with a leading zero. Now, if $b_1 \neq 0$, we must separate cases, since $\Nu_{no}(B, v, j)$  depend on who is $B$ and who is $v$. \\
     \begin{itemize}
         \item If $B > v_1 \dots v_k$: $\Nu_{no}(B, v, j) = 0$, then $y_0 > v$.
         \item If $B = v_1 \dots v_k$: $\Nu_{no}(B, v, j) = v_{k+1} \dots v_{n} + 1$, then the last $n-k$ digits of $y_0$ we can choose between 0 and $v_{k+1} \dots v_{n}$.
         \item If $B < v_1 \dots v_k$: $\Nu_{no}(B, v, j) = 10^{n-k}$, then the last $n-k$ digits of $y_0$ can take any value from 0 up to $\underbrace{9\dots9}_{n-k \: \text{digits}}$.
     \end{itemize}
     Therefore, $\Nu_{no}(B, v, 0) \leq 10^{n-k}$. \bigskip

{\em Case ${1 \leq j \leq n-k}$
}.
\begin{align*}
     \Nu_{no}(B, v, j)& = \underbracket{(a_j - 10^{j-1} + \theta_j)}_{\substack{\text{Choose the first} \\ j \text{ digits } }} \underbracket{10^{n-k-j}}_{\substack{\text{Choose the last} \\ n-k-j \text{ digits}}} 
     \\
     &= 10^{-k} \left( \sum _{i=1 }^{j} v_i 10^{n-i} - 10^{n-1} + \theta_j10^{n-j} \right)  
\text{     where $0 \leq \theta_j \leq 1$.}
     \end{align*}.
     
     Let's see  why the first equality is valid.
     For $y_j$ to be effectively less than or equal to $v$, we must choose the first $j$ digits within $(10^{j-1}, \ldots, a_j)$. It depends on the values of $B$ and $v$, whether we are taking $a_j$ inclusive or exclusive.
 
    \begin{example*}
         $B = ({\color{purple}3 \: 1})$, $n = 4$, $v = 2{\color{purple}\mathbf{32}}5$ and $j= 1$.
         So, 
\[
\Nu_{no}(B, v, 1) = \underbracket{2}_{\substack{\text{Choose the}
 \text{first digit} \\ \text{between 1 and 2} }} \cdot
 \underbracket{10}_{\substack{\text{Choose the}\\ \text{fourth digit} 
 \text{between 0 and 9}}} 
= (2 - 10^{1-1 } + \theta_1) 10^{4-2-1}
\]
 with $\theta_1 = 1$.
         In this case, we are taking $a_j = 2$ inclusive.
     \end{example*}
    \medskip

     \begin{example*}
         $B = ({\color{purple}3\:1})$, $n = 4$, $v = 2{\color{purple}\mathbf{30}}5$ and $j= 1$.
         Then, 
         \[
         \Nu_{no}(B, v, 1) = \underbracket{1}_{\substack{\text{The first}\\ \text{digit only} \\ \text{can be 1}} } \cdot \underbracket{10}_{\substack{\text{Choose the}\\ \text{fourth digit} \\ \text{between 0 and 9}}} = (2 - 10^{1-1} + \theta_1) 10^{4-2-1}, \text{          with $\theta_1 = 0$.}
         \]
\text{          with $\theta_1 = 0$.}
         In this case, we are taking $a_j = 2$ exclusive.
     \end{example*}
        \medskip

     \begin{example*}
         $B = ({\color{purple}3\:1})$, $n = 4$, $v = 2{\color{purple}\mathbf{31}}5$ and $j= 1$.\ \
         So,
         \begin{align*}
             \Nu_{no}(B, v, 1)
             &= \underbracket{10}_{\substack{\text{If the first}\\ \text{digit is 1,} \\ \text{choose the fourth}\\ \text{digit between 0 and 9}} } + \underbracket{6}_{\substack{\text{If the first}\\ \text{digit is 2,} \\ \text{choose the fourth}\\ \text{digit between 0 and 5}} }
             \\
             &= (2 - 10^{1-1}) 10^{4-2-1} + (10^{4-2-1} - 4)
             \\
             &= (2 - 10^{1-1}) 10^{4-2-1} + 10^{4-2-1}(1-\frac{4}{10})
             \\
             &= (2 - 10^{1-1} + \theta_1) 10^{4-2-1},
\text{     with $\theta_1 = 1-\frac{4}{10}$.}
         \end{align*}\flushright{}
     \end{example*}
    
Recall $M$
is the number of digits from $10^{n-1}$ to $v$, that is, the number of digits in $s_n $ up to the number $v$. Then,
 Recall  $L$ the number of digits from 1 to $10^{n-1}-1$
 By \eqref{eq:N}, $L\leq N\leq L+M$.     Indeed, $N = L + M - O(n)$ because in the worst case, $N$ is the position of the first digit of $v$ and we have to subtract $n-1$.
We have,
\setlength{\mathindent}{0pt}

\begin{align*}
             \Nu_{no}(B, v)
             &= \sum_{j=0}^{n-k} \Nu_{no}(B, v, j) 
             \\
             &\leq 10^{n-k} + 10^{-k} \left( \sum _{j=0}^{n-k} \sum _{i=1}^j v_i 10^{n-i} - 10^{n- 1} + \theta_j 10^{n-j} \right) \notag
             \\
             &= 10^{-k} \left( \sum _{j=0}^{n-k} \sum _{i=1}^j (v_i 10^{n-i} - 10^{n-1}) + \sum_ {j=0}^{n-k} \sum _{i=1}^j \theta_j 10^{n-j} \right) + O(10^{n-k}) \notag 
             \\
             &\leq 10^{-k} \left( -(n - k + 1) 10^{n-1} + \sum _{j=0}^{n-k} \sum _{i=1}^j v_i 10 ^{n-i} \right) + 10^{-k} 10^n \sum _{j=0}^{n-k} j \frac{1}{10^j} + O(10^{n-k}) \notag
             \\
             &= 10^{-k} \left( -(n - k + 1) 10^{n-1} + \sum _{j=0}^{n-k} \sum _{i=1}^j v_i 10^ {n-i} \right) + O(10^n) + O(10^{n-k}) \notag
             \\
             &\leq 10^{-k} \left( -(n - k + 1) 10^{n-1} + \sum _{j=1}^{n-k} v_i 10^{n-i} (n-k-i+ 1) \right) + O(10^{n-k}) 
             \\
             &\leq 10^{-k} M + O(10^{n-k}) 
         \end{align*}
     Let's see why the last two inequalities are valid.
  For  the previous to the last we have
      \begin{equation*}
          \begin{split}
              \sum _{j=0}^{n-k} \sum _{i=1}^j v_i 10^{n-i} &= \sum _{i=1}^0 v_i 10^{n-i} + \sum _{i=1 }^1 v_i 10^{n-i} + \ldots \sum _{i=1}^{n-k} v_i 10^{n-i} \\
              &= (v_1 10^{n-1}) + (v_1 10^{n-1} + v_2 10^{n-2}) + \ldots + (v_1 10^{n-1} + \ldots + v_ {n-k} 10^{n-(n-k)}) \\
              &= \sum _{i=1}^{n-k} v_i 10^{n-i} (n-k-i+1)
          \end{split}
      \end{equation*}
 For the last we have,
      \begin{align*}
      -(n - k + 1) 10^{n-1} + \sum _{j=1}^{n-k} v_i 10^{n-i} (n-k-i+1) &\leq -n10^{n-1} + 10^{n-1}(k-1) + n\sum _{i=1}^n v_i 10^{n-i} 
      \\&\leq M + O(10^n).
      \end{align*}
We conclude,
     \begin{equation}  \label{eq:2}
         \begin{split}
             \Nu_{no}(c, B, N) &= \sum_{\ell = k}^{n-1} \U_{no}(B, s_\ell) + \Nu_{no}(B, v ) - O(n) \\
             &\leq \sum_{\ell = k}^{n-1} 10^{\ell - k} + (\ell - k) \cdot 9 \cdot10^{\ell-k-1} + 10^{ -k} M + O(10^{n-k}),\\
         \end{split}
     \end{equation}
     where the hidden constant inside $O(10^{n-k})$ does not depend on $B$.
     \medskip

\setlength{\mathindent}{35pt}

     \textbf{Step 2:} We bound $\Nu_o(c, B, N)$, the number of overlapping occurrences. 

If $k = 1$, there are no overlapping occurrences of $B$, so we assume $k > 1$. 
Observe that 
     \[
     \Nu_o(c, B, N) \leq \sum \limits_{\ell = 1}^{n} \U_o(B, s_\ell) + O(n).
     \]
The worst case for  $N$ is realized in the last digit of $v$ when  $v$ is the last number in $s_n$,
\[
v = 10^{n}-1 = \underbrace{9 \ldots 9}_{n \text { digits}},
\]
so  should add each $\U_o(B, s_\ell)$ up to $\ell = n$. The $O(n)$ comes from summing all the overlapping occurrences that could appear between two (or more) $s_\ell$. At most there are $kn$ of those occurrences, that is $k$
 occurrences for each $s_\ell$). We can bound them by a constant that does not depend on the choice of~$B$.
\medskip    
\pagebreak

     \textbf{Step 2.1:} We bound $\U_o(B, s_\ell)$. 

     {\em Case  ${\ell \geq k}$}. For blocks of length $\ell \geq k$, there can only be overlapping occurrences two blocks, and no more.
     If $x$ is a number of $\ell$ digits, and overlapping occurrence of $B$ between $x$ and $x+1$ can happen only if the last digits of $x$ are $(b_1, \ldots, b_{k-j})$ and the first digits of $x+1$ are $(b_{k-j+1}, \ldots, b_k)$ for some $j \in \{ 1, \ldots , k-1 \}$. Therefore, $x$ must be of the form
     \[
     x = b_{k-j+1} \ldots b_k \underbrace{* \ldots *}_{\ell - k \text{ digits}} b_1 \ldots b_{k-j}.
     \]
Since there are $\ell - k$ free digits, $x$ can at most take $10^{\ell-k}$ values. Then, for each
     \\$j \in \{ 1, \ldots, k-1 \}$, $B$ can occur overlapping blocks of length $\ell$ at most $10^{\ell - k}$ times. Therefore,
     \[
     \U_o(B, s_\ell) \leq \sum _{j=1}^{k-1} 10^{\ell - k} = (k-1)10^{\ell - k}.
     \]
     \begin{example*} For
         $\ell = 6$, $B = (1\:2\:3\:4)$, $k = 4$. Then, the overlapping occurrences of $B$ are: 
        \medskip

         \begin{tabular}{c |c}
             $x$ & $x+1$\\
         \hline
             $2\:3\:4\:*\:*\:\mathbf{1}$ & $\mathbf{2\:3\:4}\:*\:*\:2$ \\
             $3\:4\:*\:*\:\mathbf{1\:2}$ & $\mathbf{3\:4}\:*\:*\:1\:3$\\
             $4\:*\:*\:\mathbf{1\:2\:3}$ & $\mathbf{4}\:*\:*\:1\:2\:4$\\
         \end{tabular}\\
         \medskip

\noindent
Then, $\U_o(B, s_\ell) = 3 \cdot 10^2$.       
In this  example equality applies because the block $B$ consists of  all different digits. But, if $B$ has repeated digits, we could be counting the same occurrence repeatedly. That's why we get a bound for $\U_o(B, s_\ell)$ and not an equality. 
     \end{example*}

     \begin{example*} For
         $\ell = 6$, $B = (1\:1\:1\:1)$, $k = 4$,  the overlapping occurrences of $B$ are: 
\medskip

         \begin{tabular}{c| c}
             $x$ & $x+1$\\
             \hline
             $1\:1\:1\:*\:*\:\mathbf{1}$ & $\mathbf{1\:1\:1}\:*\:*\:2$ \\
             $1\:1\:*\:*\:\mathbf{1\:1}$ & $\mathbf{1\:1}\:*\:*\:1\:2$\\
             $1\:*\:*\:\mathbf{1\:1\:1}$ & $\mathbf{1}\:*\:*\:1\:1\:2$\\
         \end{tabular}\\ 
         \medskip

        \noindent
 Then, $\U_o(B, s_\ell) < 3\cdot10^2$.
         The equality would imply  that numbers such as   $x = 111011$ be counted  twice  of  (once in the first row and once once in the second row).
     \end{example*}

    {\em Case $\ell < k$}. To simplify the work, and as it is sufficient for the bound we are looking for, we  bound all overlapping occurrences $B$ from $s_1$ to $ s_{k-1}$ in terms of  the number of digits from $s_1$ to $s_{k-1}$, that is, the number of digits in
     \[
     1 \: 2 \ldots 10\:11 \ldots 100 \ldots 999 \ldots \underbracket{10^{k-1}-1}_{\substack{= 9\ldots9 \\ \text{(It has }k- 1 \text{ digits})}}.
     \]
     Then,
     \[
     \sum _{\ell = 1}^{k-1} \U_o(B, s_\ell) \leq \sum _{i=1}^{k-1} \underbracket{9 \cdot 10^{i-1}} _{\substack{\text{Amount of} \\ \text{numbers in } s_i}} \cdot \underbracket{i}_{\substack{\text{Amount} \\ \text{of} \\ \text {digits}}}.
     \]
     Therefore, we bound the overlapping occurrences,  concluding Step 2:
     \begin{equation}  \label{eq:3}
             \Nu_o(c, B, N)
             \leq \sum _{\ell = k}^{n} 10^{\ell - k} (k-1) + \sum _{j=1}^{k-1}9 \cdot 10^{j-1 } \cdot j  
     \end{equation}
Using the bounds $(\ref{eq:2})$ (from Step 1) and $(\ref{eq:3})$ ( from Step 2) we obtain,
     \begin{align*}
             \Nu(c,B, N)
             &= \Nu_{no}(c, B, N) + \Nu_o(c, B, N)
             \\
             &\leq \sum_{\ell = k}^{n-1} 10^{\ell - k} + 9(\ell - k)10^{\ell - k -1} + 10^{n-k} + 10^{-k}M + O(10^{n-k}) 
             \\&\ \ \ + \sum _{\ell = k}^{n} 10^{\ell - k} (k-1) + \sum _{j=1} ^{k-1}9 \cdot 10^{j-1} \cdot j
             \\
             &= \sum_{\ell = k}^{n-1} 10^{\ell - k} + 9(\ell - k)10^{\ell - k -1} + 10^{-k}M 
             \\&\ \ \ + \sum _{\ell = k}^{n} 10^{\ell - k} (k-1) + O(10^{n-k}).
\end{align*}
Then,

\setlength{\mathindent}{0pt}

\begin{align*}
\Nu(c,B, N)             &\leq \sum_{\ell =k}^{n-1} 10^{\ell-k-1}(10 + 9\ell - 9k + 10k - 10) 
+ 10^{n-k}(k-1 ) + 10^{-k}M + O(10^{n-k})
             \\
             &= 10^{-k} \sum _{\ell = k}^{n-1} 10^{\ell - 1} \cdot 9 \cdot \ell + 10^{-k} \cdot k \cdot \frac{1}{10} \sum _{\ell = k}^{n-1} 10^\ell
  O(10^{n-k}) + 10^{-k}M + O(10^{n-k} )
             \\
             &\leq 10^{-k}L + 10^{-k} \cdot k \cdot \frac{1}{10} \left(\frac{1-10^n}{-9} - \frac{ 1-10^k}{-9} \right) 
  + O(10^{n-k}) + 10^{-k}M + O(10^{n-k})
             \\
             &= 10^{-k}L + O(10^{n-k}) + 10^{-k}M + O(10^{n-k})
             \\
             &= 10^{-k}N + O(10^{n-k}).
     \qedhere
     \end{align*}
\end{proof}
\setlength{\mathindent}{35pt}

\subsection{Proof of Theorem~\ref{thm:1}}

We need to show that
     there exists $N_0 \in \N$ and $C > 0$ such that for all $N \geq N_0$,
     $D(c, N) \leq C \frac{1}{\log N}$. 
We introduce notation. 
For $0 \leq a < b < 1$ and $N \in \N$, 
     \[
     D(c,a, b, N) = \frac{\# \{ j \in \{1, \ldots, N\}: x_j\in[0,b)\}}{N} - (b-a).
     \]
     Then,
     \[
     \sup_{0 \leq a < b < 1} |D(c,a, b, N)| = D(c, N),
     \]
To bound the discrepancy of $c$ we should take supremum of $|D(c,a, b, N)|$ over all intervals $[a, b) \subseteq [0,1)$. 
     Let us see that it suffices to consider intervals of the form $[0, b)$.
     Suppose we have proved it for every interval of the form $[0, b)$, that is, for every $ N \gg 1$,
     \[ \sup _{0 \leq b < 1} |D(c,0,b, N)| = \sup _{0<b<1} 
\left| \frac{\# \{ j \in \{1, \ldots, N\}: x_j\in[0,b)\}}{N} - b \right| \leq \frac{K}{\log N}.
     \]
For every $ N \gg 1$, $ |D(c,a, b, N)| $ is equal to 
\[
            \left| \frac{\# \{ j \in \{1, \ldots, N\}: x_n\in[0,b)\} - \# \{ j \in \{1, \ldots, N\}: x_j\in[0,a)\}}{N} - b + a\right|. 
\]
Then,     
\setlength{\mathindent}{0pt}

\begin{align*}     
|D(c,a, b, N)| 
             &\leq \left| \frac{\# \{ j \in \{1, \ldots, N\}: x_n\in[0,b)\}}{N} - b \right|
              + \left| \frac{\# \{ j \in \{1, \ldots, N\}: x_j\in[0,a)\}}{N} - a \right| \\
             &\leq 2 \sup_{0 \leq y < 1} |D(c,0,y, N)| \\
             &\leq 2 \frac{K}{\log N}\\
             &= \frac{C}{\log N}.
         \end{align*}

     \setlength{\mathindent}{35pt}

We now  prove  that for every $ N \gg 1$, for all $b \in [0,1)$,
$\sup_{0 \leq b < 1} |D(c,0, b, N)| \leq \frac{K}{\log N }$,  where the constant $K$ does not depend on $b$. 
It  is enough to see that $|D(c,0, b, N)| = O \left(\frac{1}{\log N } \right)$, for all $b \in [0,1)$, where the hidden constant in Landau's $O$ does not depend on $b$. 
We divide the proof in three steps. 

     \textbf{Step 1:} Let $k \in \N$ and $\alpha \in [0, 1)$, $\alpha = 0.\alpha_1 \ldots \alpha_k = \sum _{i=1}^k \alpha_i 10^{-i}
     $. Let $N \in \N$. We show 
     \[
     |D(c,\alpha, \alpha + 10^{-k}, N)| = O\left(\frac{10^{-k}}{\log N}\right),
     \]
     where the constant does not depend on $\alpha$. 
     For $B = (\alpha_1, \ldots, \alpha_k)$ we have,
     \begin{align*}
     \# \{ j \in \{1, \ldots, N\}: x_j\in [\alpha, \alpha + 10^{-k})\} =
  \Nu(c,B, 1, N) + O(k).
     \end{align*}
The term  $O(k)$ comes from counting the possible occurrences that could occur after the digit $c_N$ and up to the digit $c_{N+k-1}$. 
Using Lemma $\ref{lem:1}$ for the second equality we obtain,
         \begin{align}
             |D(c,\alpha, \alpha + 10^{-k}, N)|
             &= \left| \frac{1}{N} \Nu(c,B, N) - 10^{-k} + O \left( \frac{k}{N} \right) \right| \notag \\
             \notag \\
             &= \left| \frac{1}{N} 10^{-k} N + O \left( \frac{1}{N} 10^{n-k} \right) - 10^{-k} + O \left( \frac {k}{N} \right) \right| \notag \\
              \notag \\
             &= \left| 10^{-k} + O \left( \frac{1}{\log N} 10^{-k} \right) - 10^{-k} + O \left( \frac{k}{N} \right) \right| \label{eq:4} 
             \\
             \notag \\
             &= O \left( \frac{1}{\log N} 10^{-k} \right), \notag
             \end{align}
    for $N$ large enough (since $k$ is fixed). 
    By Lemma \ref{lem:1}, the hidden constant in $ O \left( \frac{10^{-k}}{\log N}\right)$ does not depend on $\alpha$. 

Recall that 
$N$ is the position at which we find $v$, 
$v$ is a term with $n$ digits,
$L$ is  number of digits from $s_1$ to $s_{n-1}$, and $M$ is the number of digits within $s_n$ to the number $v$.
Let us see why  equality  \eqref{eq:4} 
     holds.
We prove that for 
 for sufficiently large $N$
     \[
     10^n/N < 2 / \log N,
     \]
Using $L\leq N$ we have,
\setlength{\mathindent}{0pt}
     \[
     \frac{2N}{10^n} \geq \frac{2}{10^n}\left( (n-1)10^{n-1} - \frac{10^{n-1}}{ 9} + \frac{1}{9}\right) \geq 2\left((n-1) - \frac{1}{9}\right)\frac{1}{10} \geq 2\left (n - 1 - \frac{1}{9}\right) \geq n + \log n.
     \]
     And using  $N\leq L+M$ we have
\setlength{\mathindent}{35pt}
\[
     \log N < n + \log n,
\]
So,
     \[
     \log N < n + \log n < 2 N/10^n.
     \]
We obtain,
     $|D(c,\alpha, \alpha + 10^{-k}, N)| = O\left(\frac{10^{-k}}{\log N}\right)$. 

     \textbf{Step 2:} Let $h \in \N$ and $\gamma \in [0,1)$, $\gamma = 0.\gamma_1 \ldots \gamma_h = \sum _{i=1}^h \gamma_i 10^{-i}$. 
     We show
     \[
     |D(c,0, \gamma, N)| = O \left(\frac{1}{\log N}\right),
     \]
     whose constant does not depend on $\gamma$. 
This is the desired result just for intervals whose extremes $\gamma$ are numbers with finite decimal expansion.
     Let $k \in \{1,\ldots,h\}$ and $j \in \{0,\ldots,\gamma_k\}$. 
     \\
     We define 
     \[
     \lambda _{k,j} = \sum\limits _{i=1}^{k-1} \gamma_i 10^{-i} + j 10 ^{-k} = 0.\gamma_1 \ldots \gamma_{k-1} \: j.
     \]
     Then, it holds that 
     \[
     \lambda _{k, j+1} = \sum\limits _{i=1}^{k-1} \gamma_i 10^{-i} + (j+1)10^{-k} = \lambda_{k,j} + 10^{-k}.
     \]
     We observe
     \[
     \begin{array}{llll}
          \lambda _{1,0} = 0 \:, & \lambda _{1,1} = 0.1 \:, & \ldots & \lambda _{1, \gamma_1} = 0.\gamma_1 \\
          \lambda _{2,0} = 0.\gamma_1 \:, & \lambda _{2,1} = 0.\gamma_1 1 \:, & \ldots & \lambda _{2, \gamma_2} = 0.\gamma_1 \ gamma_2 \\
          \vdots & \vdots & & \vdots
          \\
          \lambda _{h,0} = 0.\gamma_1\ldots\gamma _{h-1} \:, & \lambda _{h,1} = \gamma_1\ldots\gamma _{h-1}1 \:, & \ldots & \lambda _{h, \gamma_h} = 0.\gamma_1 \ldots \gamma_h = \gamma.
     \end{array}
     \]
     Then,
    \begin{align*}
           \sum_{k=1}^{h} \sum_{j=0}^{\gamma_k - 1} D(c,\lambda_{k,j}, \lambda_{k, j+1}, N) &= \frac{1}{N} \sum_{n=1}^{N} \sum_{k=1}^{h} \sum_{j=0}^{\gamma_k - 1} \chi_{[\lambda_{k,j}, \lambda_{k, j+1})} (x_n) - \sum_{k=1}^{h} \sum_{j=0}^{\gamma_k - 1} 10^{-k} \\
           &= \frac{1}{N} \sum_{n=1}^{N} \chi_{[0,\gamma)} (x_n) - \sum_{k=1}^{h} \gamma_k 10^{-k} \\
           &= \frac{1}{N} \sum_{n=1}^{N} \chi_{[0,\gamma)} (x_n) - \gamma \\
           &= D(c,0, \gamma, N).
        \end{align*}
     Hence,
     \begin{align*}
             |D(c,0, \gamma, N)| &\leq \sum _{k=1}^{h} \sum _{j=0}^{\gamma_k - 1} |D(c,\lambda _{k,j}, \lambda _{k, j+1}, N)| \\
             &= \sum _{k=1}^{h} \sum _{j=0}^{\gamma_k - 1} D(c,\lambda _{k,j}, \lambda _{k, j} + 10^{-k}, N) \\
             &= \sum _{k=1}^{h} O \left( \frac{10^{-k}}{\log N} \right) \qquad \text{by step 1, taking $N$ sufficiently large}\\
             &= O \left( \frac{1}{\log N} \right)
         \end{align*}
where the hidden constant in $O \left(\frac{1}{\log N} \right)$ does not depend on $\gamma$, nor on $h$, since
     \[
     \sum_{k=1}^{h} O \left( \frac{10^{-k}}{\log N} \right) \leq O \left(\frac{1}{\log N}\ \right) \sum _{k=1}^\infty 10^{-k} \leq O\left(\frac{1}{\log N}\right).
     \]

     \textbf{Step 3:} Let $\beta \in [0, 1)$.
     We prove that $|D(c,0, \beta, N)| = O \left( \frac{1}{\log N} \right)$. 
     Let $N \in \N$. Let's take $h = [\log(\log N))]$.
     If $\beta$ has a finite decimal expansion, then we are in the case of Step 2 and the proof is complete. Otherwise, $\beta = 0.\beta_1 \beta_2 \ldots \beta_h \beta _{h+1}\ldots$. 
     Let $\alpha, \gamma \in [0,1)$ be such that
     \begin{align*}
     &\alpha \leq \beta \leq \gamma, \\
     &\gamma - \alpha = 10^{-h}, \\
     &\alpha 10^h \in \N, \\
     &\gamma 10^h \in \N.
     \end{align*}
     That is to say,
     $
     \alpha = 0.\beta_1 \ldots \beta_h \:\text{ and } \: \gamma = \alpha + 10^{-h}.
     $
     So,
     \begin{align*}
     D(c,0, \beta, N) &= \frac{1}{N} \sum _{n=1}^N \chi _{[0,\beta)} (x_n) - \beta 
     \\&\leq \frac{1}{N } \sum_{n=1}^N \chi_{[0,\beta)} (x_n) - \gamma + \gamma - \alpha = D(c,0, \gamma, N) + 10^{-h}.
     \end{align*}
     Similarly,
    \[
    D(c,0, \beta, N) \geq \frac{1}{N} \sum_{n=1}^N \chi_{[0,\beta)} (x_n) - \alpha + \alpha - \gamma = D(c,0, \alpha, N) - 10^{-h}.
    \]
     Then, $D(c,0, \alpha, N) - 10^{-h} \leq D(c,0, \beta, N) \leq D(c,0, \gamma, N) + 10^{-h}$. 
     Therefore,
     \begin{align*}
             |D(c,0, \beta, N)| &\leq \max \{ |D(c,0, \alpha, N) - 10^{-h}|, |D(c,0, \gamma, N) + 10^{-h}| \} \\
             &\leq \max \{ |D(c,0, \alpha, N)|, |D(c,0, \gamma, N)| \} + 10^{-h} \\
             &= O \left( \frac{1}{\log N} \right) + 10^{-h} \qquad \text{by step 2}\\
             &= O \left( \frac{1}{\log N} \right).
     \end{align*}
     The proof of Theorem \ref{thm:1} is complete.\qed

\section{Theorem~\ref{thm:2}: Lower Bound}

We prove a  lower bound for the discrepancy $D(c,N)$. 
We need to find out how much  the number of occurrences of  the blocks of equal length can differ.
It suffices to find only two \textit{witnessing blocks} 
 one whose occurrences are in excess and the other in defect.

\subsection{Witnessing Blocks}

The statement of Lemma~\ref{lem:2}  is due to Schiffer~\cite[Lemma 2]{Schiffer}. 
This is our version of the proof.

\begin{lemma} \label{lem:2}
     Let $\alpha \in [0,1)$.
 Let $B_1$ and $B_2$ be blocks of equal length $k$.
Suppose there  exists a constant $C>0$ such that for infinitelly many $N \in \N$,
     \[
     \left|\Nu(\alpha, B_1, N) - \Nu(\alpha, B_2, N) \right| > C \frac{N}{\log(N)}.
     \]
 Then, there exists a constant $K > 0$ such that for infinitely many  $N \in \N$,
     \[
     D(N, \alpha) > \frac{K}{\log(N)}.
     \]
     Furthermore, it holds for the constant $K = C/3$. 
\end{lemma}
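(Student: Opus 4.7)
The plan is to translate the combinatorial gap in block counts into a gap between the discrepancy contributions of two explicit subintervals of $[0,1)$, and then use the triangle inequality to attribute that gap to at least one of the two intervals.

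First, I would associate to each block $B = (b_1,\ldots,b_k)$ the interval $I_B = [a_B,\, a_B + 10^{-k})$, where $a_B = \sum_{j=1}^{k} b_j 10^{-j}$. Since $x_n = 0.\alpha_n \alpha_{n+1}\cdots$, the relation $x_n \in I_B$ is equivalent to $\alpha_n \alpha_{n+1}\cdots \alpha_{n+k-1} = B$. Therefore
\[
\#\{1 \le n \le N : x_n \in I_B\} \;=\; \Nu(\alpha, B, N) + E_B(N),
\]
where $E_B(N) \in \{0,\ldots,k-1\}$ absorbs the potential occurrences that start at one of the last $k-1$ positions, which $\Nu(\alpha, B, N)$ omits by definition. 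This identification is the bridge between the combinatorial hypothesis on $\Nu(\alpha, B_i, N)$ and the analytic discrepancy of $(x_n)$.

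Second, I would apply this to both witnessing blocks. Writing $I_{B_i} = [a_i, b_i)$ with $b_i - a_i = 10^{-k}$, the definition of the signed discrepancy gives
\[
D(\alpha, a_i, b_i, N) \;=\; \frac{\Nu(\alpha, B_i, N) + E_{B_i}(N)}{N} - 10^{-k}.
\]
Subtracting the two equalities, the constants $10^{-k}$ cancel, and the hypothesis yields, for infinitely many $N$,
\[
\bigl|D(\alpha, a_1, b_1, N) - D(\alpha, a_2, b_2, N)\bigr| \;\ge\; \frac{|\Nu(\alpha, B_1, N) - \Nu(\alpha, B_2, N)|}{N} - \frac{k-1}{N} \;>\; \frac{C}{\log N} - \frac{k-1}{N}.
\]

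Third, the triangle inequality gives
\[
\max_{i \in \{1,2\}} \bigl|D(\alpha, a_i, b_i, N)\bigr| \;\ge\; \tfrac{1}{2}\bigl|D(\alpha, a_1, b_1, N) - D(\alpha, a_2, b_2, N)\bigr|.
\]
Because $k$ is fixed and $N/\log N \to \infty$, for all sufficiently large $N$ the boundary term satisfies $(k-1)/N \le C/(3\log N)$, so the right-hand side exceeds $\tfrac{1}{2}\bigl(C/\log N - C/(3\log N)\bigr) = C/(3\log N)$. Since $D(\alpha, N)$ is the supremum of $|D(\alpha, a, b, N)|$ over all subintervals $[a,b) \subseteq [0,1)$, we conclude $D(\alpha, N) > K/\log N$ with $K = C/3$ for infinitely many $N$.

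The only delicate point is exactly the slack needed to absorb the boundary error $(k-1)/N$: without carving it out, the triangle inequality only delivers $K = C/2$, and one has to sacrifice an extra factor of $2/3$ to obtain the precise constant $C/3$ claimed in the lemma. Everything else is a bookkeeping translation between occurrence counts of $B_i$ in the digits of $\alpha$ and point counts of $(x_n)$ inside the intervals $I_{B_1}$ and $I_{B_2}$.
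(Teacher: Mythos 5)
Your proof is correct and takes essentially the same route as the paper: the same identification of occurrences of $B_i$ with visits of $(x_n)$ to the interval $[0.b_1\ldots b_k,\; 0.b_1\ldots b_k + 10^{-k})$, followed by the same triangle-inequality step $\max_{i}|D_i| \geq \tfrac{1}{2}|D_1 - D_2|$ and the same sacrifice from $C/2$ to $C/3$ to absorb lower-order terms. The only difference is bookkeeping: the paper applies the hypothesis at $N+k-1$ and handles the shift inside $\log(N+k-1)$ at the end, whereas you stay at $N$ and carry the boundary error $E_B(N) \leq k-1$ explicitly; both yield $K = C/3$.
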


\begin{proof}
     Let $B_1 = (b_1 \ldots b_k)$, $B_2 = (d_1 \ldots d_k)$ and $\alpha = 0.\alpha_1 \: \alpha_2 \: \alpha_3 \ldots$ $\in [0,1) $. Let 
     \[
     \begin{array}{lll}
         \beta_1 = 0.b_1 \ldots b_k = \sum _{i=1}^k b_i 10^{-i} & \text{ and } & \beta_2 = 0.d_1 \ldots d_k = \sum _{i= 1}^k d_i 10^{-i}
         \\
         I_1 = [\beta_1, \: \beta_1 + 10^{-k}) \subseteq [0,1) & \text{ and } & I_2 = [\beta_2, \: \beta_2 + 10^{-k }) \subseteq [0,1).
     \end{array}
     \]
 Observe that $\alpha \in I_1 \iff (\alpha_1 \ldots \alpha_k) = (b_1, \ldots, b_k)$. 
     And $\alpha \in I_2 \iff (\alpha_1 \ldots \alpha_k) = (d_1, \ldots, d_k)$. 
     Let $N \in \N$ be such that $N + k - 1$ satisfies the hypothesis of the statement, 
     \[
     \left|\Nu(\alpha, B_1, N+k-1) - \Nu(\alpha, B_2, N+k-1) \right| > C \frac{N+k-1}{\log(N+k-1)}.
     \]
     So,   
    \begin{align}
        D(\alpha,N) 
        & = D( (10^{n-1} \alpha \mod 1,N)_{n \in \N}) \notag
        \\
        & = \sup_{0\leq a<b<1} 
        \left| \frac{\# \{ n \in \{1, \ldots, N\}: 10^{n-1} \alpha \mod 1 \in[a,b)\}}{N} - (b-a) \right| \notag
        \\
        & \geq \max_{i=1,2}
        \left| \frac{\# \{ n \in \{1, \ldots, N\}: 10^{n-1} \alpha \mod 1 \in I_i\}}{N} - 10^{-k} \right| \notag
        \\
        & = \max_{i=1,2}
        \left| \frac{\Nu(\alpha, B_i, N+k-1)}{N} - 10^{-k} \right| 
        \label{eq:5} 
        \\
        & \geq \frac{ \left| \Nu(\alpha, B_1, N+k-1) - \Nu(\alpha, B_2, N+k-1) \right|}{2N} 
        \label{eq:6} 
        \\
        & > \frac{C(N+k-1)}{2N \log(N+k-1)} \notag
        \\
        & \geq \frac{C}{2 \log(N+k-1)}. \notag
    \end{align}
    
    To see  inequality \eqref{eq:5} notice that
$     10^{n-1} \alpha \mod 1 \in I_1,
\iff
     \alpha_n = b_1, \ldots, \alpha _{n+k-1} = b_k $,
and this happens exactly when 
$     B_1 \text{ occurs in } (\alpha_n , \ldots, \alpha _{n+k-1}).$
The same holds  for $B_2$. 
Observe that when $n=1$,
 we consider the occurrences in $(\alpha_1 \ldots \alpha_k)$, 
and when $n = N$, 
we  consider the occurrences in $(\alpha_N, \ldots, \alpha _{N+k-1})$. 

     To see inequality \eqref{eq:6} notice that\\
     \medskip
     
     $         \frac{ \left| \Nu(\alpha, B_1, N+k-1) - \Nu(\alpha, B_2, N+k-1) \right|}{2N}=
$
\begin{align*}
         &= \frac{1}{2} \left| \frac{\Nu(\alpha, B_1, N+k-1)}{N} - 10^{-k} \right. 
 \left. {} - \left( \frac{\Nu(\alpha, B_2, N+k-1)}{N} - 10^{-k} \right) \right|\\
         &\leq \frac{1}{2}
         \left( \left| \frac{\Nu(\alpha, B_1, N+k-1)}{N} - 10^{-k}\right| \right.
         \qquad \left. {} + \left| \frac{\Nu(\alpha, B_2, N+k-1)}{N} - 10^{-k}\right| \right) \\
         &\leq \frac{1}{2} 2 \max _{i=1,2} \left| \frac{\Nu(\alpha, B_i, N+k-1)}{N} - 10^{-k} \right|.
     \end{align*}
     
We obtained that $D(\alpha,N)\geq C/(1\log(N+k-1))$.
To finish the proof we can take $K=C/3$ because, since  $k$ is fixed, for sufficiently large $N$, 
         \[
         \frac{C}{2\log(N+k-1)} \geq \frac{C}{3\log(N)}.
         \]
\end{proof}

\subsection{Proof of the Theorem~\ref{thm:2}}

We use Lemma~\ref{lem:2}. Let $B_1$ and $B_2$ be of equal length $k \geq 2$, 
 $B_2 = (0 \ldots 0)$ the block of all zeros, and $B_1 = (1 \: 1 *\ldots *)$, 
where the asterisk~$*$ represents any digit between $0$ and $9$.
\medskip


     \begin{example*}
     Let $k=2$, $B_1 = (1\:1)$ and $B_2 = (0\:0)$, 
Let's see that $B_1$ has occurrences in excess and $B_2$ in defect. 
Notice that $B_1$ has overlapping occurrences in the expansion of $c$ 
bit  $B_2$ does not, because $B_2$ is the block of all zeros and there aren't terms $t_i$ 
beginning with a $0$. 
For the non-overlapping occurrences of each block  we must count.
Let's count of the non-overlapping occurrences of $B_1$ and $B_2$ in 
$(1 \ldots 999) = (s_1 \: s_2 \: s_3)$; that is, we want to calculate for $i = $1.2,
     \[
     \sum_{\ell = 1}^{3} \U_{no} (B_i, s_\ell).
     \]
 We count up to $\ell = 3$, but the procedure is similar for  every $\ell$. 
     
     {\em Case of $B_1$:}
     \begin{itemize}
         \item $\U_{no} (B_1, s_1) = 0$, because $k =2> 1$.
         \item $\U_{no} (B_1, s_2) = 1$, because the only occurrence of $B_1$ in $s_2$ is $(1 \: 1)$.
         \item $\U_{no} (B_1, s_3) = 19$, because there are ten occurrences of the form $(1 \: 1 \: *)$ and nine of the form $(* \: 1 \: 1)$.
     \end{itemize}
     Thus,
   $
     \sum_{\ell = 1}^{3} \U_{no} (B_1, s_\ell) = 20.
     $

     {\em Case of $B_2$:}
     \begin{itemize}
         \item $\U_{no} (B_1, s_1) = 0$, because $k > 1$.
         \item $\U_{no} (B_1, s_2) = 0$, because the term $(0\:0)$ does not appear in the $c$.
         \item $\U_{no} (B_1, s_3) = 9$ because there are nine occurrences of the form $(* \: 0 \: 0)$, but none of the form $(0 \: 0 \: *)$ .
     \end{itemize}
     Thus,
$     \sum_{\ell = 1}^{3} \U_{no} (B_1, s_\ell) = 9.
 $

\flushright{\qedhere}
\end{example*}

     In order to use Lemma~\ref{lem:2}, we must see that there exists a constant $C$ such that for infinitely many $N \in \N$,
     \[
     \left|\Nu(c, B_1, N) - \Nu(c, B_2, N) \right| > C \frac{N}{\log(N)},
     \]
     where $B_1$ and $B_2$ are the witnessing blocks. 
     Let $N \in \N$. Observe that
     $     \left|\Nu(c, B_1, N) - \Nu(c, B_2, N) \right| $ is equal to 
     \[
 \left|\Nu_{no}(c, B_1, N) + \Nu_o(c, B_1, N) - \left(\Nu_{no}(c, B_2, N) + \Nu_o(c, B_2, N) \right) \right|.
     \]
    We first calculate $\Nu_{no}(c, B_1, N) - \Nu_{no}(c, B_2, N)$.
     For $\ell \in \N$, we calculate $\U_{no}(B_1, s_\ell) - \U_{no}(B_2, s_\ell)$. Recall in Lemma \ref{lem:1} we define $\U_{no}(B_i, s_\ell)$ as the number of non-overlapping occurrences of $B_i$ in 
     \[
     s_{\ell} = (10^{\ell -1}, \ldots , 10^{\ell}-1) = (\underbrace{10\ldots0}_{\ell \text{ digits}}, \: \ldots \: , \underbrace{9\ldots9}_{\ell \text { digits}}).
    \]
     
     If $\ell<k$: $\U_{no}(B_i, s_\ell) = 0$, for $i = 1,2$.
     
     If $\ell \geq k$:

     $    \U_{no}(B_1, s_\ell) = 10^{\ell-k} + (\ell - k) \cdot 9 \cdot 10^{\ell-k-1}$, because the first digit of $B_1$ is not $0$.
         \\
   \hspace*{1.8cm}     $ \U_{no}(B_2, s_\ell) = (\ell - k) \cdot 9 \cdot 10^{\ell-k-1}$, because the first digit of $B_2$ is $0$.
   
     Therefore,     
     \begin{align} 
     \label{eq:7}
         \U_{no}(B_1, s_\ell) - \U_{no}(B_2, s_\ell) = 10^{\ell-k} ,\: \text{ for all }\ell \geq k.
     \end{align}

     \begin{remark*}
         We are using that the first digit of $B_2$ is zero and the first digit of $B_1$ is not zero. 
     \end{remark*}

Let $v=v(N)$ and $n=n(N)$. As in the proof of Lemma \ref{lem:1}, for $i=1,2$ it holds that
     \[
     \Nu_{no} (B_i, c, N) = \sum_{\ell = k}^{n-1} \U_{no}(B_i, s_\ell) + \Nu_{no}(B_i, v) -O(n)
     \]
We must calculate $\Nu_{no}(B_1, v) - \Nu_{no}(B_2, v)$.
     
     If $n < k$: $\Nu_{no}(B_i, v) = 0$. 
     
     If $n \geq k$: As in the proof of Lemma \ref{lem:1} 
let $v = v_1 \dots v_n$, and for each $j\in\{0,\dots,n-k\}$,
let  $a_j = v_1 \dots v_j$. Let  $\Nu_{no}(B, v, j)$ be the amount of numbers of the form
     \[
     y_j = \underbrace{*\ldots*}_{j \text{ digits}} \: \: B \underbrace{*\ldots*}_{n - k - j \text{ digits}} \: \text{ with } y_j \leq v.
     \]
 
{\em Case ${j=0}$}. We  count the amount of numbers of the form
     \[
     y_0 = B_i \underbrace{*\dots*}_{n-k \text{ digits}} \quad \text{con } y_0 \leq v.
     \]
Since the first digit of $B_2$ is zero, 
\[
\Nu_{no}(B_2, v, 0) = 0.
\]

Again by proof of Lemma \ref{lem:1}, 
\[
     \Nu_{no}(B_1, v, 0) \leq 10^{n-k} .
 \]

 {\em Case  ${1 \leq j \leq n-k}$}.
The amount of numbers of the form $y_j$ 
with the first $j$ digits less than $a_j$ , then  $y_j < v$,
and then $\Nu_{no}(B_1, v, j)=\Nu_{no}(B_2, v, j)$.
If the first $j$ digits of $y_j$ are greater than $a_j$, then $y_j > v$ 
so  they do not add up to $ \Nu_{no}(B_1, v, j)$ nor  to $\Nu_{no}(B_2, v, j)$. 
     Finally,  if  the first $j$ digits of $y_j$ are equal to $a_j$. 
There, the number of non-overlapping occurrences  of $B_1$ could be different 
from that of $B_2$. We analyze this case.
     Let  $\Delta_j(B_i)$ be  the amount of numbers of the form
     \[
     y_j = v_1 \ldots v_j \: B_i \!\!\! \underbrace{* \ldots *}_{n-k-j \text{ digits}}, \quad \text{ with } y_j \leq v.
     \]
     \begin{itemize}
         \item If $B_i > v_{j+1} \dots v_{j+k} : \: \Delta_j(B_i) = 0$, then in that case $y_j > v$.
         \item If $B_i = v_{j+1} \dots v_{j+k}: \: \Delta_j(B_i) = v_{j + k +1} \dots v_n + 1$, then the last $n-k-j$ they can take any value from
 $0$ to $v_{j + k+1} \ldots v_n$.
         \item If $B_i < v_{j+1} \dots v_{j+k} : \: \Delta_j(B_i) = 10^{n-k-j}$, then 
the last $n-k-j$ can take any value from 0 to $ \underbrace{9\dots9}_{n-j-k}$.
     \end{itemize}
     Therefore, $\Delta_j(B_i) \leq 10^{n-k-j},  \text{ for each } j \in \{1,\dots,n-k\}$. 
So, for $N$ large enough so that $n \geq k$, 
     \begin{align} 
     \label{eq:8}
         \Nu_{no}(B_1, v) - \Nu_{no}(B_2, v) &= \Nu_{no}(B_1, v, 0) + \sum_{j=1}^{n-k} (\Nu_{no}(B_1, v, j) - \Nu_{no}(B_2, v, j)) \\
        & = \Nu_{no}(B_1, v, 0) + \sum _{j=1}^{n-k} (\Delta_j(B_1 ) - \Delta_j(B_2)).\nonumber
     \end{align}

     \begin{remark*}
         Since $B_2 \leq B_1$, then $0 \leq \Delta_j(B_1) \leq \Delta_j(B_2)$. Hence,
         \[ v_1 \ldots v_j \: B_1 \underbrace{*\dots*}_{\substack{n-k-j\\ \text{digits}}} \: \leq \: v \quad \text{ implies }
         \quad v_1 \ldots v_j \ : B_2 \underbrace{*\dots*}_{\substack{n-k-j\\ \text{ digits}}} \: \leq \: v.
         \]
     \end{remark*}
Hence,
     \begin{equation}
          \label{eq:9}
         0 \leq \Delta_j(B_1) - \Delta_j(B_2) \leq 10^{n-k-j}.
     \end{equation}
We conclude that  for all $N$ large enough so that $n \geq k$,

     \setlength{\mathindent}{35pt}

     \begin{align} 
     &\Nu_{no}(c, B_1, N) - \Nu_{no}(c, B_2, N)=
 \label{eq:10}
             \\
             &= \sum_{\ell=k}^{n-1} \U_{no}(B_1, s_\ell) + \Nu_{no}(B_1, v) - O(n) - \left( \sum_{\ ell=k}^{n-1} \U_{no}(B_2, s_\ell) + \Nu_{no}(B_2, v) - O(n) \right)
   \nonumber 
             \\
             &= \sum_{\ell=k}^{n-1} \U_{no}(B_1, s_\ell) - \U_{no}(B_2, s_\ell) + \Nu_{no}(B_1, v) - \ \Nu_{no}(B_2, v) + O(n)
             \nonumber
             \\
             &= \sum_{\ell=k}^{n-1} 10^{\ell-k} + \Nu_{no}(B_1, v) - \Nu_{no}(B_2, v) + O(n ) \qquad \text{using } (\ref{eq:7})
             \nonumber
             \\
             &= \sum _{\ell=k}^{n-1} 10^{\ell-k} + \Nu_{no}(B, v, 0) + \sum _{j=1}^{n-k} \Delta_j(B_1) - \Delta_j(B_2) + O(n) \qquad \text{using } (\ref{eq:8})
             \nonumber\\
             &\geq \sum _{\ell=k}^{n-1} 10^{\ell-k} + \sum _{j=1}^{n-k} \Delta_j(B_1) - \Delta_j(B_2) + O (n)
             \nonumber
             \\
             &=\sum _{\ell=k}^{n-1} 10^{\ell-k} - \sum _{j=1}^{n-k} \Delta_j(B_2) - \Delta_j(B_1) + O( n)
             \nonumber
             \\
             &\geq \sum _{\ell=k}^{n-1} 10^{\ell-k} - \sum _{j=1}^{n-k} 10^{n-j-k} + O(n) \qquad \text{using } (\ref{eq:9}).
             \nonumber
\end{align}
     \setlength{\mathindent}{35pt}

     We now count the overlapping occurrences. 

     \begin{remark*}
         Since $B_2$ is the block of all zeros, it has no overlapping occurrences (since no number starts with leading zeros). 
This explains the choice of  $B_2$.
     \end{remark*}

We count  the overlapping occurrences of $B_1$. 
It is enough for us to count some of them,
enough so as to  verify the hypothesis of Lemma~\ref{lem:2}. 

     Recall that $B_1 = (1 \: 1\underbrace{*\dots*}_{k-2 \text{ digits}}) = (b_1 b_2 \dots b_k)$. 
Then, we define for each $\ell \in \N$,
     \[
     P_{\ell} = \{m \in \N : m = \underbrace{b_2 b_3 \dots b_k \overbrace{* \dots *}^{\ell-k \text{ digits}} b_1}_{\ ell \text{ digits}} = b_2 10^{\ell-1} + b_3 10^{\ell-2} + \ldots + b_k 10^{\ell - k +1} + \ldots + b_1 \}
     \]
     That is, $P_\ell$ is the set numbers  of $\ell$ digits, 
whose first $k-1$ digits are $(b_2 \: b_3 \dots b_k)$, and its last digit is $b_1$ .
     
     If $\ell<k$: $\#P_{\ell} = 0$. 
     
     If $\ell \geq k$: $\#P_{\ell} = 10^{\ell-k}$, then there are $\ell - k$ free digits. 

     \begin{remark*}
         We are using that $b_2 \neq 0$, otherwise we would be looking for $m \in \N$ that starts with a leading zero.
     \end{remark*}
     Observe  that for every element of $P_{\ell}$, there is an occurrence overlapping $B_1$ between two $\ell$ digit numbers. Then,
     \begin{equation}
     \label{eq:11}
         \U_o(B_1, s_\ell) \geq 10^{\ell-k}, \:\:\: \forall \ell \geq k
     \end{equation}
     where, remember, $\U_o(B_1, s_\ell)$ was the number of overlapping occurrences $B_1$ in $s_\ell$.

     \begin{remark*}
         We are using:
         \begin{itemize}
             \item $k \neq 1$, otherwise there would be no occurrences around $B_1$. 
             
            \item $b_1 \neq 9$, because if not, it could happen that $m = b_2 \: 9 \dots 9$ and then $m+1$ does not have $b_2$ as its first digit, and therefore, it does not produce an overlapping.
         \end{itemize}
     \end{remark*}
    
Therefore, for all $N$ large enough so that  $n \geq k$,
\medskip

     \setlength{\mathindent}{0pt}
$
    \Nu(c, B_1, N) - \Nu(c, B_2, N) =
$
 \setlength{\mathindent}{35pt}
     \begin{align}
         &= \Nu_{no}(c, B_1, N) + \Nu_o(c, B_1, N) ) - \left(\Nu_{no}(c, B_2, N) + \Nu_o(c, B_2, N) \right)
             \nonumber
             \\
             &= \Nu_{no}(c, B_1, N) - \Nu_{no}(c, B_2, N) + \Nu_o(c, B_1, N) - \underbracket{\Nu_o( B_2, c, 1, N)}_{\substack{= 0}}
             \nonumber
             \\
             &\geq \sum _{\ell=k}^{n-1} 10^{\ell-k} - \sum _{j=1}^{n-k} 10^{n-j-k} + O(n) + \Nu_o(c,B_1,  1, N) \qquad \text{using } (\ref{eq:10})
             \nonumber
             \\
             &\geq \sum_{\ell=k}^{n-1} 10^{\ell-k} - \sum _{j=1}^{n-k} 10^{n-j-k} + O(n) + \sum_ {l=1}^{n-1} \U_o(B_1,s_\ell)
             \nonumber
             \\
             &\geq \sum_{\ell=k}^{n-1} 10^{\ell-k} - \sum _{j=1}^{n-k} 10^{n-j-k} + O(n) + \sum_ {\ell=k}^{n-1} 10^{\ell-k} \qquad \text{using } (\ref{eq:11})
             \nonumber
             \\
             &= 2\sum _{\ell=k}^{n-1} 10^{\ell-k} - \sum _{m=k}^{n-1} 10^{m-k} + O(n) \qquad \text{change of variable: } m=n-j
             \nonumber
             \\
             &= \sum_{\ell=k}^{n-1} 10^{\ell-k} + O(n)
             \nonumber
             \\
             &\geq \frac{1}{10^{k+1}} 10^n
             \nonumber
             \\
             &\geq \frac{1}{10^{k+1}}  \frac{N}{\log(N)}. \label{eq:final}
     \end{align}  
To see the last inequality observe that, since $n$ is the number of digits of $v$, 
     \[
     \sum _{j=1}^{n-1} \underbracket{j}_{\substack{\text{Number of}\\ \text{digits of}\\ \text{each number} }} \cdot \: \underbracket{9 \cdot 10^{j-1}}_{\substack{\text{Quantity}\\ \text{of numbers}\\ \text{in } s_j}} \leq N \leq \sum_ {j=1}^{n} j \cdot 9 \cdot 10^{j-1}.
     \]
     Furthermore, for all $r \in \N$, $\sum\limits_{j=1}^{r} j \cdot 9 \cdot 10^{j-1} = 10^r r - \frac{10^ r}{9} + \frac{1}{9}$.
     Then,
   \[
     N \geq \sum _{j=1}^{n-1} j \cdot 9 \cdot 10^{j-1} = 10^{n-1} n-1 - \frac{10^{n-1 }}{9} + \frac{1}{9} > 10^{n-1} \left(n-1-\frac{1}{9}\right)
     \]
     Therefore,
\[\log(N) \geq n-1 + \log\left(n - \frac{10}{9}\right).
     \]
And
 \[
     \log(N)10^n 
     \geq  \left( n-1 + \log(n - \frac{10}{9}) \right) 10^n 
     \geq \left( 10^n n - \frac{10^n}{9} + \frac{1}{9} \right) 
     =  \sum _{j=1}^{n} j \cdot 9 \cdot 10^{j-1} 
     \geq  N.
     \]

     Finally, we  make the  constant $K$ explicit.    
 We  just proved in  (\ref{eq:final}) that  
 \[ \Nu(c, B_1, N) - \Nu(c, B_2, N) \geq C \frac{N}{\log(N)}
 \] with $C=\frac{1}{10^{k+1}} $.
Since since the smallest possible value of  $k$ that we can take is $k=2$, and by   Lemma~\ref{lem:2}  we can take $K = C/3$,   we obtain
     \[
     K = \frac{C}{3} = \frac{1}{10^{k+1}  3} = \frac{1}{10^3  3}.
     \]

    The statement of Theorem~\ref{thm:2} asks the bound  for infinitely many $N$. 
Since we gave the lower bound   for 
all $N$ sufficiently large so that $n(N) \geq k$,
we gave it for   for cofinitely many $N$, a stronger result.
The proof of  Theorem \ref{thm:2} is complete. $\qed$


\section*{Acknowledgements}
This work was supported by grant  UBACYT 20020220100065BA from Universidad de Buenos Aires.
\bibliographystyle{plain}

\bibliography{champ}

\begin{thebibliography}{10}

\bibitem{BC}
V.~Becher and O.~Carton.
\newblock Normal numbers and computer science.
\newblock In V.~Berth{\'e} and M.~Rigo, editors, {\em Sequences, Groups, and
  Number Theory}, Trends in Mathematics Series. Birkh{\"a}user/Springer, 2018.

\bibitem{BS}
V.~Becher and T.~Slaman.
\newblock On the normality of numbers to different bases.
\newblock {\em Journal of the London Mathematical Society}, 90(2):472--494,
  2014.

\bibitem{Bugeaud}
Y.~Bugeaud.
\newblock {\em Distribution modulo one and {D}iophantine approximation}, volume
  193 of {\em Cambridge Tracts in Mathematics}.
\newblock Cambridge University Press, Cambridge, 2012.

\bibitem{champernowne}
D.~G. Champernowne.
\newblock The construction of decimals normal in the scale of ten.
\newblock {\em Journal of the London Mathematical Society}, 8:254--260, 1933.

\bibitem{Wall}
D.D.Wall.
\newblock {\em Normal numbers}.
\newblock PhD thesis, University of California Berkeley, 1949.
\newblock Ph.D.Thesis.

\bibitem{fukuyama}
K.~Fukuyama.
\newblock The law of the iterated logarithm for discrepancies of $\{$$\theta$
  nx$\}$.
\newblock {\em Acta Mathematica Hungarica}, 118:155--170, 2008.

\bibitem{gal}
I.~G\'al and L.~G\'al.
\newblock The discrepancy of the sequence $(2^n x)$.
\newblock {\em Indagationes Mathematicae}, 26:129--143, 1964.

\bibitem{HW}
G.~H. Hardy and E.~M. Wright.
\newblock {\em An introduction to the theory of numbers}.
\newblock Oxford, at the Clarendon Press,, 1954.
\newblock 3rd ed.

\bibitem{H}
G.H. Hardy.
\newblock Letter from {G}.{H}. {H}ardy to {A}.{M}. {T}uring, late 1930s.
\newblock Turing Digital Archive AMT/D/5 -1 ALS, G.H. Hardy. [late 1930s].

\bibitem{korobov}
N.~Korobov.
\newblock Numbers with bounded quotient and their applications to questions of
  diophantine approximation.
\newblock {\em Izv. Akad. Nauk SSSR Ser. Mat.}, 19:361--380, 1955.

\bibitem{KN}
L.~Kuipers and H.~Niederreiter.
\newblock {\em Uniform distribution of sequences}.
\newblock Pure and Applied Mathematics. Wiley-Interscience [John Wiley \&
  Sons], New York-London-Sydney, 1974.

\bibitem{NS}
Y.~Nakai and I.~Shiokawa.
\newblock Discrepancy estimates for a class of normal numbers.
\newblock {\em Acta Arithmetica}, 62(3):271--284, 1992.

\bibitem{Philipp}
W.~Philipp.
\newblock Limit theorems for lacunary series and uniform distribution mod 1.
\newblock {\em Acta Arithmetica}, 26(3):241--251, 1975.

\bibitem{Schiffer}
J.~Schiffer.
\newblock Discrepancy of normal numbers.
\newblock {\em Acta Arithmetica}, 47(2):175--186, 1986.

\bibitem{schmidt}
W.~Schmidt.
\newblock Irregularities of distribution {VII}.
\newblock {\em Acta Arithmetica}, 21:45--50, 1972.

\end{thebibliography}
\bigskip

\begin{minipage}{\textwidth}
\small

\noindent
Verónica Becher
\\Departamento de Computación,
Facultad de Ciencias Exactas y Naturales e ICC\\
Universidad de Buenos Aires y CONICET\\
Pabellón 0, Ciudad Universitaria
(C1428EGA) Buenos Aires, Argentina\\
{\tt vbecher@dc.uba.ar}
\bigskip

\noindent
Nicole Graus
\\Departamento de Matemática,
Facultad de Ciencias Exactas y Naturales
Universidad de Buenos Aires
Pabellón 1, Ciudad Universitaria
(C1428EGA) Buenos Aires, Argentina
\\
{\tt nicole.graus.h@gmail.com}
\end{minipage}
\end{document}